\newtheorem{thm}{Theorem}[section]
\newtheorem{claim}{CLAIM}
\newtheorem{cor}[thm]{Corollary}
\newtheorem{lem}[thm]{Lemma}
\newtheorem{prop}[thm]{Proposition}
\theoremstyle{definition}
\newtheorem{defn}[thm]{Definition}
\newtheorem{remark}[thm]{Remark}
\newtheorem{example}[thm]{Example}
\newtheorem{construction}[thm]{Construction}
\newtheorem{convention}[thm]{Convention}
\def\dim{\operatorname{dim}}
\def\det{\operatorname{det}}
\def\Spec{\operatorname{Spec}}
\def\tensor{\otimes}
\def\onto{\twoheadrightarrow}
\def\into{\hookrightarrow}
\def\Sym{\operatorname{Sym}}%
\def\VV{V}%
\def\cubicspace{\mathbb{P}\left(\Sym^3 \VV\right)}%
\def\Hilbshort{\mathcal{H}}%
\def\Hilbshortzero{\Hilbshort_{gen}}%
\def\Hilbshortother{\Hilbshort_{1661}}%
\def\exceptional{\Hilbshort^{gr}_{1661}}%
\def\OO{\mathcal{O}}%
\def\DIR{D_{IR}}%
\def\DVap{D_{V-ap}}%
\def\Sing{\operatorname{Sing}}%
\def\gr{\operatorname{gr}}%
\DeclareMathOperator{\hook}{\lrcorner}
\def\spann#1{\operatorname{span}\left( #1 \right)}%
\newcommand{\SL}[1]{\operatorname{SL}(#1)}%
\DeclareMathOperator{\VSP}{VSP}%
\newcommand{\Ann}[1]{\operatorname{Ann}\left( #1 \right)}%
\newcommand{\Apolar}[1]{\operatorname{Apolar}\left( #1 \right)}%
\newcommand{\Dx}{\alpha}
\begin{document}

\title{VSPs of cubic fourfolds and the Gorenstein locus of the Hilbert scheme of $14$ points on
    $\mathbb{A}^6$}
    \author{Joachim Jelisiejew\thanks{While preparing this paper, the author
        was a doctoral fellow at the Warsaw Center of
Mathematics and Computer Science, University of Warsaw and was financed by the Polish program KNOW
and grant 2014/13/N/ST1/02640.\newline
Current email: jjelisiejew@impan.pl}}
    \maketitle

    \begin{abstract}
        To a cubic fourfold one may associate a geometric object (a
        hyperk\"ahler manifold) via the
        theory of VSP and an algebraic object (a finite Gorenstein
        algebra) via apolarity.
        We prove that the associated algebra is smoothable
        if and only if the fourfold lies on the Iliev-Ranestad divisor (which parameterizes
        certain cubics whose $\VSP$ is isomorphic to the Hilbert
        scheme of two points on a K3 surface).
        This bridge allows us to give a detailed description of the algebraic side, i.e., the
        Gorenstein locus of 14 points on $\mathbb{A}^6$ and also to identify the
        equation of the Iliev-Ranestad divisor as the unique degree $10$ invariant of
        $\operatorname{SL}_6$.
        As our main technical tool, we develop a relative version of apolarity,
        building on ideas of Elias-Rossi.
    \end{abstract}

    \section{Introduction}

    Let $V$ be a six-dimensional $\mathbb{C}$-vector space and let $F\in
    \Sym^3 V$ be a cubic.

    Let $(F = 0) \subset \mathbb{P}V^{*}  \simeq \mathbb{P}^5$ be a cubic fourfold. It is classically
    known that for general $F$ the Fano variety of lines on $(F= 0)$ is a hyperk\"ahler fourfold.
    In~\cite{Ranestad_Iliev__VSPcubic, Ranestad_Iliev__VSPcubic_addendum,
    Ranestad_Voisin__VSP} another hyperk\"ahler fourfold is constructed and
    investigated. Namely, one considers all
    $10$-tuples $\lambda_1, \ldots ,\lambda_{10}$ of points of $\mathbb{P}V$ such that
    $F\in \spann{\lambda_1^3, \ldots ,\lambda_{10}^3}$. Such tuple $\Gamma =
    \{\lambda_1, \ldots , \lambda_{10}\}$ gives a point of the Hilbert scheme of
    $10$ points in $\mathbb{P}V$ and the closure of
    the set of $\Gamma$'s is denoted by $\VSP(F, 10)$. For general $F$ the
    variety $\VSP(F, 10)$ this is a smooth hyperk\"ahler
    fourfold~\cite{Ranestad_Iliev__VSPcubic}. The name VSP stands for Variety
    of Sums of Powers. To prove that $\VSP(F, 10)$ is hyperk\"ahler, Ranestad
    and Iliev construct a divisor $\DIR$ in the space of cubic fouldfolds,
    such that for $[F]\in \DIR$ the variety $\VSP(F, 10)$ is isomorphic to a
    Hilbert scheme of two points on a K3 surface,
    see~\cite[Theorem~3.7, Proposition~3.11]{Ranestad_Iliev__VSPcubic}.

    In a more algebraic flavour, we have the apolarity (or contraction) action
    $(-)\hook(-)\colon\Sym V^* \tensor \Sym V \to \Sym V$. This
    action is easier to describe after a choice of basis $x_1, \ldots ,x_6$ of
    $V$. Then we also have a dual basis $\Dx_1, \ldots ,\Dx_n$ of $V^*$ and
    $\Sym V = \mathbb{C}[x_1, \ldots ,x_n]$, $\Sym V^* = \mathbb{C}[\Dx_1, \ldots ,\Dx_n]$.
    We define $(-)\hook(-)$ on monomials by the rule
    \begin{equation}\label{eq:apolaritydef}
        (\Dx_1^{a_1} \ldots \Dx_n^{a_n})\hook (x_1^{b_1} \ldots x_n^{b_n}) =
        \begin{cases}
            x_1^{b_1-a_n} \ldots x_n^{b_n-a_n} & \mbox{ if all } b_i\geq a_i\\
            0 & \mbox{otherwise}.
        \end{cases}
    \end{equation}
    and extend it bi-linearly to all polynomials.
    For $G\in \Sym V$, we define $\Ann{G} := \left\{ P\ |\
    P\hook G = 0\right\}$ and $\Apolar{G} := \Sym V^* / \Ann{G}$. This is a
    finite local Gorenstein algebra, corresponding to a scheme $\Spec
    \Apolar{G}\subset V$ supported at the origin.
    In particular for a cubic $F$ which is not a cone the algebra
    $\Spec\Apolar{F}$ has degree $14$, hence gives a point on the Hilbert
    scheme of $14$ points on $\mathbb{A}^6$. Since $\Apolar{F}$ is
    Gorenstein, the point $\Spec \Apolar{F}$ lies in the \emph{Gorenstein locus} of this Hilbert
    scheme.

    In this paper we investigate the geometry of the Gorenstein locus near
    $\Spec \Apolar{F}$ and in fact we obtain a fairly full description of the
    Gorenstein locus for $14$ points of $\mathbb{A}^6$. This is one of the very few
    cases, when such description is known; indeed little is known about the
    Hilbert schemes of points on higher dimensional affine or projective
    spaces, despite much research on their singularities
    \cite{emsalem_iarrobino_small_tangent_space,
        iarrobino_kanev_book_Gorenstein_algebras,
    erman_Murphys_law_for_punctual_Hilb} and components~\cite{cn09, cn10,
        dosSantos, Sivic__Varieties_of_commuting_matrices, DJNT,
    Jelisiejew__Elementary}. The interest in the Hilbert schemes of points and
    especially their Gorenstein loci is recently renewed
    by their connections with secant varieties and thus with classical projective geometry,
    see~\cite{bubu2010, LO, iarrobino_kanev_book_Gorenstein_algebras,
    Galazka_VB_eqs_of_cacti}. Note that the Hilbert scheme of $\mathbb{P}^6$
    is covered by the Hilbert schemes of $\mathbb{A}^6$ where $\mathbb{A}^6$
    are standard open subsets. Hence the difference between
    $\mathbb{A}^6$ and $\mathbb{P}^6$ is not crucial for our considerations. In this paper we
    have chosen to use $\mathbb{A}^6$.

    \bigskip

    Recall that $\VV$ is a $6$-dimension vector space with its natural affine
    space structure.
    Denote by $\Hilbshort$ the Gorenstein locus of the Hilbert scheme of
    $14$ points on $\VV$.
    Topologically, the scheme $\Hilbshort$ has two irreducible
    components: the smoothable component $\Hilbshortzero$ of dimension $84$ and another
    component $\Hilbshortother$ of dimension $76$, see~\cite{cjn13}.
    A general point of $\Hilbshortzero$ corresponds to a set of $14$ points on
    $\VV$. Therefore finite schemes $Z$ corresponding to points in $\Hilbshortzero$ are
    called \emph{smoothable}.


    \def\cubicspacereg{\mathbb{P}\left(\Sym^3 \VV\right)_{1661}}%
    By~\cite{cjn13} the points of $\Hilbshortother$ correspond
    bijectively to finite irreducible subschemes $Z \subset V$ such that the local
    algebra $H^0(Z, \OO_Z)$ has Hilbert function $(1, 6, 6, 1)$.
    Let $\exceptional \subset \Hilbshortother$ be the set corresponding to
    $Z$ supported at the origin of $\VV$ and invariant under the dilation
    action of $\mathbb{C}^*$,~i.e.,~such that $H^0(V, I_Z)$ is homogeneous.
    Then $\exceptional \subset \Hilbshortother$ is a closed subset.
    We endow $\Hilbshortother$ with a scheme structure, which we
    formally define as the scheme-theoretic closure $\Hilbshortother =
    \overline{\Hilbshort \setminus \Hilbshortzero}$. Under this definition it
    is not clear whether $\Hilbshortother$ is reduced, although we show that indeed
    it is (we do not know whether $\Hilbshortzero$ is reduced). We endow
    $\exceptional$ with a reduced scheme structure.


    \begin{thm}[description of the Gorenstein locus of Hilbert scheme of $14$
            points on $\mathbb{A}^6$]\label{ref:mainthm:thm}
        With notation as above, the following hold.
        \begin{enumerate}
            \item\label{it:smoothness} The component $\Hilbshortother$ is connected and smooth
                (hence reduced).
            \item\label{it:retraction} There is a morphism
                \[\pi:\Hilbshortother \to \exceptional\]
                which makes $\Hilbshortother$ the total space of a rank $21$ vector
                bundle over $\exceptional$.
            \item\label{it:exceptional} The set $\exceptional$ is canonically
                isomorphic to an open subset
                of $\cubicspace$ consisting precisely of cubics
                which are not cones. We denote this subset by
                $\cubicspacereg$.
            \item\label{it:intersection} The set theoretic intersection $\Hilbshortzero \cap \Hilbshortother$
                is a prime divisor inside $\Hilbshortother$ and it is equal to
                $\pi^{-1}(\DIR)$, where ${\DIR\subset \exceptional \subset
                \cubicspace}$ is the restriction of the Iliev-Ranestad divisor.
                We get the following diagram of vector bundles:
                \[
                    \xymatrix{\Hilbshortzero \cap \Hilbshortother   \ar[d] &
                        \subset &
                        \Hilbshortother \ar[d]\\
                    \DIR  & \subset&\cubicspacereg}
                \]
        \end{enumerate}
    \end{thm}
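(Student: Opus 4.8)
The plan is to prove the four parts more or less in the logical order that makes the later parts depend on the earlier ones: first understand $\exceptional$ and its identification with $\cubicspacereg$, then build the vector bundle structure on $\Hilbshortother$ over $\exceptional$, and only then analyze smoothness and the intersection divisor.

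For part (3), I would use Macaulay's inverse systems. A point of $\exceptional$ corresponds to a graded Gorenstein algebra $A$ with Hilbert function $(1,6,6,1)$, so its apolar dual socle generator is a cubic form $F \in \Sym^3 \VV$, well-defined up to scalar, i.e. a point of $\cubicspace$. The condition that $A$ has Hilbert function $(1,6,6,1)$ rather than something smaller in degrees $1$ or $2$ translates, by apolarity, precisely into the condition that $F$ is not a cone (the degree-$1$ piece of the annihilator being zero means no linear form annihilates $F$, i.e. $F$ genuinely uses all six variables after no change of coordinates makes it depend on fewer). Conversely every non-cone cubic gives such an algebra. This gives a bijection, and I would upgrade it to an isomorphism of schemes by exhibiting the universal family over $\cubicspacereg$: the locus of non-cones is open in $\cubicspace$, and over it one writes down the universal graded algebra, inducing a classifying morphism that is inverse to the apolarity construction.

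For parts (1) and (2), the key technical input is a tangent space / obstruction computation. Each finite scheme $Z$ with local algebra of Hilbert function $(1,6,6,1)$ has an associated graded algebra lying in $\exceptional$; sending $Z$ to $\gr Z$ should be the morphism $\pi$, and I would construct it using the relative Rees algebra / the $\mathbb{C}^*$-action degenerating $Z$ to $\gr Z$, checking this is a well-defined morphism of schemes (the Hilbert function is locally constant on $\Hilbshortother$ by the description from \cite{cjn13}). The fibers of $\pi$ parameterize all algebras $A$ with a fixed associated graded $A_0$; deformation theory of such algebras (deforming the multiplication while fixing the leading term, equivalently choosing a "non-graded part" of the dual socle generator, a polynomial of degree $\leq 2$ modulo the degree-$3$ part and modulo lower-order ambiguities) shows the fiber is an affine space of dimension $76 - 55 = 21$ — here $55 = \dim \cubicspacereg = \dim \mathbb{P}(\Sym^3 \VV)$. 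Then I would compute $\dim T_{[Z]} \Hilbshort = 76$ at a general, hence (by semicontinuity plus the bundle structure) at every point of $\Hilbshortother$, forcing $\Hilbshortother$ to be smooth; the bundle structure follows once $\pi$ is a smooth morphism with affine-space fibers of constant rank $21$ over the smooth base $\exceptional \cong \cubicspacereg$ (a $\mathbb{C}^*$-equivariant affine bundle is automatically a vector bundle). Connectedness is then immediate since the base $\cubicspacereg$ is irreducible.

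For part (4), the intersection $\Hilbshortzero \cap \Hilbshortother$ consists of those non-smoothable-looking $Z$ that are limits of smoothable schemes. Since the smoothable component has the larger dimension $84$ and meets $\Hilbshortother$ in something contained in a proper closed subset, and since both are fibered compatibly over $\cubicspace$-type bases, I expect the intersection to be $\pi^{-1}$ of a divisor $D \subset \exceptional$. To identify $D$ with $\DIR$ I would use the geometric description of $\DIR$: a cubic $F$ lies in $\DIR$ exactly when the apolar algebra of $F$ admits a presentation compatible with a section of the cone over $Gr(2,\VV)$, which is precisely the extra structure that allows smoothing (a $14$-point scheme specializes to $\gr Z$ along a family inside the cone over the Grassmannian). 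Concretely I would match the known set-theoretic equation(s) of $\DIR$ against the condition "$Z \in \overline{\Hilbshortzero}$" by a tangent-space or catalecticant-type computation, or cite the relevant statement from the VSP literature. The fact that $D$ is a \emph{prime} divisor follows because $\pi$ is a vector bundle, so $\pi^{-1}(\DIR)$ is irreducible and reduced as soon as $\DIR$ is, and $\DIR$ is an irreducible divisor in $\cubicspace$.

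The hard part, I expect, is the deformation-theoretic core of parts (1)–(2): showing that the obstruction space vanishes (or that obstructions are unobstructed along the $\mathbb{C}^*$-direction) so that the fiber of $\pi$ is genuinely a $21$-dimensional affine space and that the total tangent space is exactly $76$-dimensional everywhere, not just generically. Controlling $\Ext$ groups / the $T^1$ and $T^2$ of these non-graded Gorenstein algebras of socle degree $3$ — and in particular ruling out jumping of $\dim T^1$ on $\Hilbshortother$ — is where the real work lies; the identification with $\DIR$ in part (4) is the second-hardest point and is where I would lean most heavily on the Iliev–Ranestad–Voisin theory cited in the introduction.
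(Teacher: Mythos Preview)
Your outline for part~(3) and the construction of $\pi$ in part~(2) are essentially what the paper does. But your argument for smoothness in part~(1) contains a genuine error: it is \emph{not} true that $\dim T_{[Z]}\Hilbshort = 76$ at every point of $\Hilbshortother$. At points of $\Hilbshortzero\cap\Hilbshortother$ the tangent space to $\Hilbshort$ has dimension at least $85$ (indeed, any point lying on two components of dimensions $84$ and $76$ must be singular in the ambient scheme). Semicontinuity goes the wrong way here: it only tells you the tangent-space dimension can jump \emph{up} on closed subsets, and it does. So you cannot deduce smoothness of $\Hilbshortother$ from a tangent-space computation on $\Hilbshort$ alone.

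The paper circumvents this as follows. First it establishes the rank-$21$ vector bundle structure only on the \emph{reduced} scheme $(\Hilbshortother)_{\mathrm{red}}$; reducedness is used essentially, via the relative Macaulay inverse systems, to produce the morphism $\pi$. Separately, one computes that $\dim T_{[Z]}\Hilbshort = 76$ holds exactly on the complement of a divisor $E\subset\exceptional$ (controlled by whether $(I^2)_4^{\perp}$ has the minimal dimension $6$). Thus the open set $\Hilbshortother\setminus\Hilbshortzero$ is smooth, hence reduced; and since $\Hilbshortother$ is by definition the closure of this open set, $\Hilbshortother$ is reduced. Now $\Hilbshortother=(\Hilbshortother)_{\mathrm{red}}$ is a vector bundle over a smooth base, hence smooth.

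Your plan for part~(4) is also too vague at the crucial step. The paper does not directly match equations of $\DIR$ against a smoothability criterion. Instead it proves the chain of equalities
\[
\Hilbshortother\cap\operatorname{Sing}\Hilbshort \;=\; \Hilbshortother\cap\Hilbshortzero \;=\; \pi^{-1}(\DIR)
\]
by an indirect argument: the singular locus $E\subset\exceptional$ is cut out by the determinant of an explicit $120\times120$ evaluation map, is $\operatorname{SL}(V)$-invariant, and a one-parameter computation shows $\deg E=10$. Since there are no $\operatorname{SL}(V)$-invariants in $\Sym^{\bullet}\Sym^3 V^*$ of degree less than $10$, the divisor $E$ is prime. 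One has the easy inclusions $\DIR\subset\Hilbshortzero\cap\exceptional\subset E$ (smoothable implies singular), and primality of $E$ forces equality. This simultaneous identification of the singular locus with the smoothable locus is the key mechanism, and your proposal misses it.
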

    The most surprising part of this theorem is that the intersection of
    $\Hilbshortzero\cap \Hilbshortother$ --- which parameterizes smoothable
    subschemes from $\Hilbshortother$ --- is essentially given by the divisor
    $\DIR$, i.e., by a geometric condition. This, along with the proof of
    reducedness of $\Hilbshortother$, are the most nontrivial parts of the
    proof.

    The map $\pi$ is defined at the level of points as follows. We take $[Z]\in
    \Hilbshortother$. Translating, we fix its support at $0\in \VV$. Then we
    replace $H^0(Z, \OO_Z)$ by its associated graded algebra which in this
    very special case is also Gorenstein. We take $\pi([Z])$ to be the point
    corresponding to $\Spec \gr H^0(Z, \OO_Z)$ supported at the origin of
    $\VV$.
    The identification of $\exceptional$ with $\cubicspacereg$ in
    Point~\ref{it:exceptional} is done canonically using Macaulay's
    inverse systems. Note that the complement of $\cubicspacereg$  has
    codimension greater than one, hence divisors on $\cubicspace$ and
    $\exceptional$ are identified via restriction and closure.
    We describe a set-theoretic  equation of $\DIR \subset \cubicspace$
    explicitly as an $120\times 120$ determinant, see the discussion before
    Lemma~\ref{ref:intersectiondegree:lem}. We also note
    that $\DIR \subset \cubicspace$ is the unique $\SL{V}$-invariant divisor
    of degree~$10$.
    The following result gives a rich source of points in the intersection
    $\Hilbshortzero \cap \exceptional$, by means of the ninth secant variety,
    which is the closure of cubics which are sums of nine cubes.
    \def\secantvariety{\sigma_9 \nu_3 (\mathbb{P}\VV)}%
    \begin{prop}[{\cite[Lemma~3.1]{CHILO__symmetrizing_matrix_multiplication}}]\label{ref:secant:prop}
        Set-theoretically, the intersection $\secantvariety \cap
        \cubicspacereg$ is contained in $\DIR$. In other words, the apolar scheme of any form $[F]\in
        \cubicspacereg$ of border rank at
        most nine is smoothable.
    \end{prop}
    In particular by~\cite{AlexHirsch}, the secant variety $\secantvariety$ is of codimension one
    in the space of smoothable cubics. There is a strong evidence that it is a
    complete intersection of $\DIR$ and another geometrically defined divisor
    $\DVap$, see Remark~\ref{ref:chilozuccero:rmk}.

    It also follows from Theorem~\ref{ref:mainthm:thm} that both components of
    $\Hilbshort$ are rational. It is an open question whether
    all components of Hilbert schemes of points are rational. Roggero and
    Lella prove that components satisfying strong smoothness assumptions are
    rational, see~\cite{Roggero_Lella__rationality}. The component
    $\Hilbshortother$ is also the unique known Gorenstein component with
    dimensional smaller than $\Hilbshortzero$, see
    Remark~\ref{ref:uniquesmall:rmk}.

    The major part of this paper is
    the proof of Theorem~\ref{ref:mainthm:thm}, which is abstracted into three
    Claims, see Section~\ref{ssec:proof}. The
    main underlying tool is given by relative Macaulay's inverse
    systems, presented in Section~\ref{sec:prelims}, which we now
    discuss. Recall the definition of the apolarity action
    from~\eqref{eq:apolaritydef}. It is classically known
    (see~\cite{emsalem, iarrobino_kanev_book_Gorenstein_algebras}) that this action
    parameterizes all finite Gorenstein subschemes as follows.
    \begin{center}
    \vspace{-\baselineskip}
        \begin{tabular}[h]{@{} p{0.3\textwidth} lll p{0.5\textwidth}}
            Finite Gorenstein subschemes &&&& Polynomials/power series\\ \midrule
            $Z \subset \mathbb{C}^n$ && is equal to && $\Spec\Apolar{F}$
            for some
            $F\in \mathbb{C}_{dp}[[x_1, \ldots ,x_n]]$\\
            $Z \subset \mathbb{C}^n$ supported at the origin && is
            equal to && $\Spec\Apolar{F}$ for some
            $F\in \mathbb{C}_{dp}[x_1, \ldots ,x_n]$\\
            $Z \subset \mathbb{C}^n$ supported at the origin,
            invariant under dilation && is
            equal to && $\Spec\Apolar{F}$ for some homogeneous
            ${F\in \mathbb{C}_{dp}[x_1, \ldots ,x_n]}$\\
        \end{tabular}
    \end{center}
    Inspired by~\cite{emsalem, Elias_Rossi__Gorenstein_for_higher_dims,
    Masuti_Tozzo__Structure_of_inverse_system},  we generalize
    the above description to \emph{families} over an affine Noetherian base
    $A$, as follows (see Section~\ref{ssec:relative} for details).
    \begin{center}
        \begin{tabular}[h]{@{} p{0.4\textwidth} lll p{0.43\textwidth}}
            Families of Gorenstein subschemes &&&& Polynomials/power series\\ \midrule
            Family of $Z \subset \mathbb{C}^n$ && is locally && $\Spec\Apolar{F}$
            for some
            $F\in A[[x_1, \ldots ,x_n]]$\\
            Family of $Z \subset \mathbb{C}^n$, each $Z$ supported at the
            origin && is locally && $\Spec\Apolar{F}$
            for some
            $F\in A[x_1, \ldots ,x_n]$\\
            Family of $Z \subset \mathbb{C}^n$, each $Z$ supported at the
            origin and invariant under dilation && is locally && $\Spec\Apolar{F}$
            for some homogeneous
            ${F\in A[x_1, \ldots ,x_n]}$\\
        \end{tabular}
    \end{center}
    The assumptions that the base is affine and that the
    description holds only locally can be removed
    at the expense of changing language to a more geometric one, for example
    one needs to  replace $F$ by a line bundle.

    \subsection*{Acknowledgements}

    This paper builds upon the beautiful math presented in the
    works of Iliev, Ranestad and
    Voisin~\cite{Ranestad_Iliev__VSPcubic, Ranestad_Voisin__VSP}, Cartwright, Erman,
    Velasco and Viray~\cite{CEVV}  and
    Elias and Rossi~\cite{EliasRossiShortGorenstein}. Additionally,
    we thank Giorgio Ottaviani for discussing questions related to
    $\secantvariety$ and Gianfranco Casnati and Roberto Notari for
    discussing irreducibility questions. We are grateful to the
    organizers of ``Theory and applications of syzygies, on the occasion of
    Frank-Olaf Schreyer’s 60th birthday'' (supported by the DFG and NSF), where the author had, among other
    inspiring activities, the possibility to enjoy Kristian Ranestad's lecture
    and discuss with Giorgio Ottaviani. Finally, we thank Jaros{\l{}}aw Buczy\'nski
    for introducing us to the subject and for his helpful comments on
    a preliminary version.

    The computations were made using Magma~\cite{Magma}, Macaulay2~\cite{M2}
    and LiE~\cite{LiE}.

    \section{Preliminaries on finite schemes}\label{sec:prelims}

    \def\mmR{\mathfrak{m}_R}%
    Let $Z$ be an irreducible finite scheme over $\mathbb{C}$. Then
    $R = H^0(Z, \OO_Z)$ is a local $\mathbb{C}$-algebra with maximal ideal
    $\mmR$. The associated graded algebra of $R$ is $\gr R =
    \bigoplus_{i\geq 0} \mmR^i/\mmR^{i+1}$. The \emph{Hilbert function} of $Z$
    (or $R$) is defined as $H(i) := \dim_{\mathbb{C}} \mmR^i/\mmR^{i+1}$.
    The algebra $R$ is Gorenstein if and only if the annihilator of $\mmR$ is
    a one-dimensional $\mathbb{C}$-vector space.  If $R$ is Gorenstein then
    the \emph{socle degree} of $R$ is the largest $d$ such that $H(d) > 0$.
    Let $R$ be Gorenstein of socle degree $d$. We will use the following
    properties of the Hilbert function (see~\cite{bubu2010, ia94}):
    \begin{enumerate}
        \item If $R$ is graded, then $H(d - i) = H(i)$ for all~$i$.
        \item If $H(d-i) = H(i)$ for all $i$, then $\gr R$ is also Gorenstein.
        \item If $H = (1, n, n, 1)$ then $\gr R$ is isomorphic to $R$,
            see~\cite{EliasRossiShortGorenstein, EliasRossi_Analytic_isomorphisms}.
    \end{enumerate}

    \subsection{Macaulay's inverse systems}
    \def\kk{\Bbbk}%
    \def\SS{S}%
    \def\PP{P}%
    \def\Hom#1{\operatorname{Hom}_{#1}}%
    \def\WW{V}%
    See~\cite{EliasRossi_Analytic_isomorphisms,
    iarrobino_kanev_book_Gorenstein_algebras, Jel_classifying} for
    general facts about Macaulay's inverse systems. In this
    section we work over an arbitrary algebraically closed field $\kk$ and
    with a finite dimensional $\kk$-vector space $\WW$ with $n=\dim \WW$.
    For the main part of the article, in Section~\ref{ssec:proof} we
    only use the case $\kk = \mathbb{C}$ and $6$-dimensional~$V$.

    Let $\SS := \Sym \WW^*$ be a polynomial ring. Then we may view $\PP =
    \Hom{\kk}(S, \kk)$ as an $\SS$-module via $(s_1f)(s_2) = f(s_1s_2)$. It is
    common to introduce dual bases $\Dx_1, \ldots ,\Dx_n$ on $\WW^*$ and
    $x_1, \ldots , x_n$ on $\WW$. Then $\SS$ becomes $\kk[\Dx_1, \ldots ,\Dx_n]$
    and $\PP = \kk_{dp}[[x_1, \ldots ,x_n]]$ the divided powers power series
    ring, with the action
    \begin{equation}\label{eq:contraction}
        (\Dx_1^{a_1} \ldots \Dx_n^{a_n})\hook (x_1^{b_1} \ldots x_n^{b_n}) =
        \begin{cases}
            x_1^{b_1-a_n} \ldots x_n^{b_n-a_n} & \mbox{ if all } b_i\geq a_i\\
            0 & \mbox{otherwise}.
        \end{cases}
    \end{equation}
    In fact $\Dx_i$ acts on $\PP$ as a derivation.
    By abuse of notation we will denote the subring $\kk_{dp}[x_1, \ldots ,x_n]
    \subset \PP$ by $\Sym \WW$. The abuse here is that $\Sym \WW$ is a divided
    power ring and not a polynomial ring. Eventually we work over fields
    of characteristic zero, where these notions agree in a non-obvious way, see
    \cite[Example~A.5, p.~266]{iarrobino_kanev_book_Gorenstein_algebras}.
    In this paper the ring structure of $\PP$ is used only a few times and the
    $\SS$-module structure is far more important for our purposes.

    If $I \subset \SS$ is homogeneous then for every $d$ by $I_d^{\perp}
    \subset \Sym^d \WW$ we denote the perpendicular space.

    Let $F\in \PP$. By $\Ann{F}$ we denote its annihilator in $\SS$ and by
    $\Apolar{F} = \SS/\Ann{F}$ its \emph{apolar algebra}. Note that
    $\SS$-modules $\Apolar{F}$ and $\SS\hook F$ are isomorphic.
    As a special case, we may take an apolar algebra of $F\in \Sym \WW = \kk_{dp}[x_1, \ldots ,x_n] \subset \PP$.
    The crucial result by Macaulay is the following theorem.
    \begin{thm}
        The algebra $\Apolar{F}$ is a local artinian Gorenstein algebra
        supported at $0\in \Spec \SS$. Conversely, every such algebra is equal to
        $\Apolar{F}$ for a (non-unique) $F\in \Sym \WW$.
    \end{thm}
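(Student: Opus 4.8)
The plan is to prove Macaulay's theorem in two directions, relating local artinian Gorenstein quotients of $\SS$ to cyclic $\SS$-submodules of $\PP$.

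First I would observe that $\Apolar{F} = \SS/\Ann{F}$ is artinian: since $F$ has finite length as divided-power polynomial (once we note $F \in \Sym \WW$, though the argument works for any $F$ with $\SS F$ finite-dimensional — for a general $F \in \PP$ one restricts to the finitely many monomials actually appearing, or argues that $\SS F$ is spanned by the finitely many contractions), the module $\SS \hook F \cong \Apolar{F}$ is a finite-dimensional $\kk$-vector space, hence $\Apolar{F}$ is artinian. Locality supported at $0$ follows because every $\Dx_i$ acts nilpotently on $F$ (each contraction lowers total degree, and $F$ has bounded degree), so the irrelevant maximal ideal $\mathfrak{m} = (\Dx_1, \ldots, \Dx_n)$ is the unique maximal ideal and $\Apolar F$ is local with nilpotent maximal ideal. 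For the Gorenstein property, I would identify the socle $\operatorname{ann}_{\Apolar F}(\mathfrak m)$ with the perpendicular condition: an element $s \in \SS$ lies in the socle modulo $\Ann F$ iff $\Dx_i \cdot (s \hook F) = 0$ for all $i$, i.e. $s \hook F$ is a constant (degree-zero) element of $\PP$; the space of such $s \hook F$ is at most one-dimensional (spanned by the nonzero constant $1$, obtained by applying a suitable top-degree operator to $F$), and it is nonzero as long as $F \neq 0$. Dualizing, the socle of $\Apolar F$ is one-dimensional, which is exactly the Gorenstein condition recalled in the preliminaries.

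For the converse, given a local artinian Gorenstein $\kk$-algebra $A = \SS/I$ supported at $0$, so that $I \subset \mathfrak m$ and $\mathfrak m^{N+1} \subset I$ for some $N$, I would use the perfect pairing between $\SS/\mathfrak m^{N+1}$ and its linear dual inside $\PP$, namely $\Sym^{\leq N} \WW$, under the contraction action. Set $F$ to be a $\kk$-generator of the annihilator $I^\perp = \{ G \in \Sym^{\leq N}\WW : s \hook G = 0 \ \forall s \in I\}$, viewed as a submodule of $\PP$. The point is that $I^\perp$ is a cyclic $\SS$-module: because $A$ is Gorenstein, $\operatorname{Hom}_\kk(A, \kk)$ is a free $A$-module of rank one (the canonical module of an artinian Gorenstein ring is the ring itself), and under the identification of $\operatorname{Hom}_\kk(A,\kk)$ with $I^\perp \subset \PP$ this says $I^\perp = \SS \hook F$ for any generator $F$. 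Then by the first part $\Apolar F = \SS/\Ann F$, and $\Ann F = I$ since $s \hook F = 0 \iff s \in I$ (one inclusion is the definition of $I^\perp$; the reverse uses that $F$ generates so $s$ kills all of $I^\perp$, hence $s$ pairs to zero with everything in $\SS/\mathfrak m^{N+1}$ modulo... more precisely $s \hook G = 0$ for all $G \in I^\perp$, and nondegeneracy of the pairing between $A$ and $I^\perp$ forces $s \in I$). Non-uniqueness is clear since $F$ can be rescaled, or more substantially changed by automorphisms.

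The main obstacle is the converse direction, specifically establishing that $I^\perp$ is a \emph{cyclic} $\SS$-module — equivalently that the Matlis/$\kk$-linear dual of an artinian Gorenstein algebra is generated by a single element. This is where the Gorenstein hypothesis is used in an essential way (for a general artinian $A$ the dual need not be cyclic), and the cleanest route is to invoke that for an artinian local ring $A$ the module $E = \operatorname{Hom}_\kk(A,\kk)$ is the injective hull of the residue field, and $A$ Gorenstein $\iff$ $E \cong A$ as $A$-modules $\iff$ $E$ is cyclic; the one-dimensionality of the socle of $A$ is what makes $\operatorname{Hom}_A(\kk, E) = \operatorname{Hom}_A(\kk,A) = \operatorname{socle}(A)$ one-dimensional, and Nakayama then upgrades a single socle-dual generator to a generator of all of $E$. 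Everything else — artinianness, locality, nilpotence, the socle computation in the forward direction — is a direct unwinding of the contraction formula~\eqref{eq:contraction} and the definition of Gorenstein recalled above.
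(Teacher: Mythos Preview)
Your proposal is correct and, for the converse direction, follows essentially the same route as the paper's proof-sketch: both identify $\Hom_\kk(A,\kk)$ as the dualising module of $A=\SS/I$, use the Gorenstein hypothesis to conclude it is a cyclic $\SS$-module, embed it into $\PP$ via the surjection $\SS\to A$, and take a generator $F$ so that $\Ann{F}=I$. The paper cites \cite[Chapter~21]{Eisenbud} for principality of $\omega_A$, whereas you spell this out via the injective-hull characterisation and Nakayama; these are the same fact. The paper also notes explicitly that $\mathfrak m^{\operatorname{len} R}\subset I$ forces $F\in\Sym\WW$, which you handle implicitly by working inside $\Sym^{\leq N}\WW$ from the start.

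The paper deliberately omits the forward direction (it is presented only as a proof-sketch ``to motivate the relative version''), so your treatment of artinianness, locality at the origin, and the one-dimensional socle via the map $s\mapsto s\hook F$ into constants is additional and correct.
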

    \begin{proof}[Proof-sketch]
        We include one implication of this well-known proof to motivate the relative
        version given in the next section. Namely for an artinian Gorenstein
        algebra $R = \SS/I$
        supported at the origin we find an $F\in \Sym \WW$ such that $R = \Apolar{F}$.

        Since $R$ is artinian, its dualising module $\omega_R$ is isomorphic
        to $\Hom{\kk}(R, \kk)$. Since $R$ is Gorenstein, this dualising module is
        principal (\cite[Chapter 21]{Eisenbud}). Take a generator $F$ of this
        module.
        The surjection $\SS\to R$ gives an inclusion $\omega_R = \Hom{\kk}(R,
        \kk)
        \subset \Hom{\kk}(\SS, \kk) = \PP$. In this way $\omega_R$ becomes an
        $\SS$-module, whose annihilator is $I$. Since $\omega_R$ is principal
        generated by $F$, we get that $\Ann{F} = I$. Therefore $R =
        \Apolar{F}$.
        \def\mm{\mathfrak{m}}%
        \def\len{\operatorname{len}}%
        We have $\mm^{\len R} \subset I$, where $\mm$
        is the ideal of $0\in \Spec \SS$. Hence $\mm^{\len R} \subset
        \Ann{F}$ which forces $F\in \Sym \WW$.
    \end{proof}
    The freedom of choice of $F$ is well understood, see for
    example~\cite{Jel_classifying}.

    If the algebra in question is naturally graded, we may take $F$ to be homogeneous. This
    is the case of our main interest. By $\SS_r$ we denote forms of degree
    $r$. Let $F$ be homogeneous of degree $d$ and $I = \Ann{F}$.
    Then $\SS_r\hook F$ is annihilated by $I_{d-r}$ and in fact $\SS_r\hook F =
    I_{d-r}^{\perp}$. The symmetry of the Hilbert function of $\Apolar{F}$
    shows that $\dim I_{d-r}^{\perp} = \dim I_r^{\perp}$ for all $r$.

    Numerous examples of polynomials and corresponding artinian Gorenstein
	algebras can be found in the literature, for instance in~\cite{nisiabu_jabu_kleppe_teitler_direct_sums,
	iarrobino_kanev_book_Gorenstein_algebras, cn10}.

    \begin{remark}\label{ref:associatedgraded:rem}
        If $F\in \Sym \WW$ is arbitrary, but such that $\gr \Apolar{F}$ is also
        Gorenstein, then $\gr\Apolar{F}$ is isomorphic to $\Apolar{F_d}$, where $F_d$ is the
        top degree form. In particular, if $F$ is of degree three and
        $H_{\Apolar{F}} = (1, n, n,1)$ then $\gr \Apolar{F} = \Apolar{F_3}$.
        Also it follows that $H_{\Apolar{F}} = (1, n, n,
        1)$ if and only if $H_{\Apolar{F_3}} = (1, n, n, 1)$,
        see~\cite[Chapter~1]{ia94}.
    \end{remark}

    \begin{remark}\label{ref:uniquenessofcubics:rem}
        Let $F\in \Sym \WW$ be a polynomial of degree three such that $H_{\Apolar{F}} = (1, n,
        n, 1)$ where $n = \dim \WW$; in other words the Hilbert function is maximal.
        It will be important in the following to note that $\Ann{F}$ and $\Apolar{F}$ only partially depend
        on the lower homogeneous components of $F$. To see this, it is enough
        to write explicitly the $\SS$-module $\SS\hook F$.
        \begin{align*}
            \SS\hook F &= \spann{F,\ \{\Dx_i\hook F\ |\ i\}, \ \{\Dx_i\Dx_j\hook
            F\ |\ i, j\},\   \left\{ \Dx_i\Dx_j\Dx_k\hook F\ |\ i, j, k \right\}}
            =
            \\ &=
            \spann{F,\ \{\Dx_i\hook F\ |\ i\}} \oplus \Sym^{\leq 1} \VV =
            \spann{F_3 + F_2}\oplus \spann{\Dx_i\hook F_3\ |\ i} \oplus
            \Sym^{\leq 1} \VV.
        \end{align*}
        Therefore $\SS\hook F$ is uniquely defined by giving $F_3$ and $F_2
        \mod \spann{\Dx_i\hook F_3\ |\ i}$, up to multiplication by a constant.
    \end{remark}

    Finally we should give at least one explicit example.
    \begin{example}
        Let $F = x_1^3 +  \ldots  + x_n^3 \in \kk_{dp}[x_1, \ldots ,x_n]$. Then $\Ann{F} = (\Dx_i\Dx_j\ |\
        i\neq j) + (\Dx_i^3 - \Dx_j^3\ |\ i, j)$. Hence $H_{\Apolar{F}} = (1,
        n, n, 1)$.
    \end{example}

    \subsection{Relative Macaulay's inverse systems}\label{ssec:relative}
    \def\SymA{\Sym_A}%
    \def\divpow#1{P_{#1}}%
    \def\Homsheaf#1{\mathcal{H}om_{#1}}%
    \def\PPA{\divpow{A}}%
    \def\PPB{\divpow{B}}%
    \def\SSA{\SS_{A}}%


    The main result of this section is a local description of this universal
    family over the Gorenstein locus or, equivalently, any finite flat family
    with Gorenstein fibers (see Proposition~\ref{ref:localdescription:prop}
    and Proposition~\ref{ref:apolarflatness:prop}). This local description involves certain choices.
    This local description, in a special case of $A = \kk[[t]]$, first appeared
    in~\cite[Proposition~18]{emsalem}, unfortunately without a proof.

    With respect to the setting of the previous section there are two
    generalisations: first, we consider families instead of single subschemes
    and second, we consider all finite subschemes, in particular reducible
    ones (similar description of finite subschemes appeared independently
    in~\cite{Mourrain_polyexp}).
    We will see that both of there generalisations are not too hard, but in technical terms they
    force us to work with (divided powers) power series rings and base change, which
    accounts for some intransparency in the proofs.
    To help the reader separate the results of
    Proposition~\ref{ref:localdescription:prop} from the technicalities we provide
    the following intuitive comparison between families and apolar algebras.
    Here by dilation we mean the diagonal action of the torus $\kk^*$. All $Z$
    considered are finite. All families are over an affine, Noetherian base $A$.
    \begin{center}
    \vspace{-\baselineskip}
        \begin{tabular}[h]{@{} p{0.4\textwidth} lll p{0.4\textwidth}}
            Finite Gorenstein subschemes &&&& Polynomials/power series\\ \midrule
            $Z \subset \mathbb{C}^n$ && is equal to && $\Apolar{F}$
            for some
            $F\in \mathbb{C}_{dp}[[x_1, \ldots ,x_n]]$\\
            $Z \subset \mathbb{C}^n$ supported at the origin && is
            equal to && $\Apolar{F}$ for some
            $F\in \mathbb{C}_{dp}[x_1, \ldots ,x_n]$\\
            $Z \subset \mathbb{C}^n$ supported at the origin,
            invariant under dilation && is
            equal to && $\Apolar{F}$ for some homogeneous
            ${F\in \mathbb{C}_{dp}[x_1, \ldots ,x_n]}$\\
        \end{tabular}

        \medskip
        \begin{tabular}[h]{@{} p{0.4\textwidth} lll p{0.4\textwidth}}
            Families of Gorenstein subschemes &&&& Polynomials/power series\\ \midrule
            Family of $Z \subset \kk^n$ && is locally && $\Apolar{F}$
            for some
            $F\in A[[x_1, \ldots ,x_n]]$\\
            Family of $Z \subset \kk^n$, each $Z$ supported at the
            origin && is locally && $\Apolar{F}$
            for some
            $F\in A[x_1, \ldots ,x_n]$\\
            Family of $Z \subset \kk^n$, each $Z$ supported at the
            origin and invariant under dilation && is locally && $\Apolar{F}$
            for some homogeneous
            ${F\in A[x_1, \ldots ,x_n]}$\\
        \end{tabular}
    \end{center}
    Conversely, given $F\in A[[x_1, \ldots ,x_n]]$ as above one can wonder, whether $\Apolar{F}$ is a
    finite and flat family. This is a subtle question, but over a reduced ring
    $A$ it is necessary and enough that for each $t\in \Spec A$ the algebra $\Apolar{F_t}$ is finite of length independent of
    $t$. See Proposition~\ref{ref:apolarflatness:prop}.

    \subsubsection{Apolar families}
    Let $\kk$ be an arbitrary algebraically
    closed field. Let $A$ be a \emph{Noetherian}
    $\kk$-algebra and $\WW$ be a finite dimensional $\kk$-vector space.
    We define contraction action identically as in the previous section, but
    using coefficients from $A$ instead of $\kk$.
    Denote $\SymA(-) := {A\tensor_{\kk} \Sym(-)}$. Denote by $\SSA$ the space
    $\SymA \WW^*$ and by $\PPA$ the space $\Hom{A}(\SSA, A)$.
    We have
    a natural action of $\SSA$ on $\PPA$ by precomposition: for $s\in
    \SSA$ and $f:\SSA \to A$ we define $(sf)(t) = f(st)$.
    Abusing notation, we write $\Sym \WW$ for the divided powers ring. Then we
    may identify $\Sym \WW^*$ with $\left( \Sym \WW \right)^*$ and view
    elements of $\SymA \WW$ as elements of $\PPA$.
    In coordinates, the above description is much easier and down to earth:
    \[
        \SSA = A[\Dx_1, \ldots , \Dx_n]\qquad\mbox{and}\qquad \SymA \WW = A[x_1, \ldots ,x_n]
    \subset \PPA = A[[x_1, \ldots ,x_n]].\]
    The precomposition action is
    the $A$-linear action described by Formula~\eqref{eq:contraction}
    in the case $A = \kk$. Note that this action is $A$-linear, hence behaves
    well with respect to a base change.

    \def\ap#1{a_{#1}}%
    \begin{defn}
        For $F\in \PPA$ we define
        an ideal $\Ann{F}\subset \SSA$ by $\Ann{F} := \left\{ \sigma\in \SSA\ |\
        \sigma\hook F = 0 \right\}$ and call it the \emph{apolar ideal} of $F$.
        The algebra $\Apolar{F} := \SSA/\Ann{F}$ is called the \emph{apolar algebra} of $F$.
        It gives an affine \emph{apolar family}
        \[
            \ap{F}:\Spec \Apolar{F} \to \Spec A,
        \]
        which it is naturally embedded in $\WW \times \Spec A \to \Spec A$.
    \end{defn}

    Note that
    \begin{enumerate}
        \item if $F\in \SymA \WW$, then $\ap{F}$ is finite,
        \item $\SSA$-modules $\Apolar{F}$ and $\SSA\hook F$ are isomorphic.
    \end{enumerate}

    Different choices of $F$ lead to the same apolar family. But it is
    crucial for our purposes to keep track of $F$. So we always assume it to
    be fixed.
    \begin{convention}
        When speaking of $\Apolar{F}$ we always implicitly assume that $F$ is fixed.
    \end{convention}

    Let us see some examples.
    \begin{example}
        Assume $\operatorname{char} \kk \neq 2$.
        Take $A = \kk[t]$ and $F = tx_1^2 + x_1x_2\in A[x_1, x_2]$. Then
        $\SSA\hook F = \spann{F, x_1, x_2, 1}$ so that $\Ann{F} =
        (\Dx_1^2 - t\Dx_1\Dx_2,\,\Dx_2^2)$. The fiber of $\Spec\Apolar{F}$
        over $t = \lambda$ is $\kk[\Dx_1, \Dx_2]/(\Dx_1^2 -
        \lambda\Dx_1\Dx_2,\,\Dx_2^2) = \Apolar{\lambda x_1^2 + x_1x_2} = \Apolar{F_{\lambda}}$.
    \end{example}


    \begin{example}\label{ex:notionofflatness}
        Take $A = \kk[t]$ and $F = tx\in
        A[x]$. Then $\Ann{F} = (\Dx^2)$, so that the fibers of the associated
        apolar family are all equal to $\kk[\Dx]/\Dx^2$.
        For $t = \lambda$ non-zero we have $\kk[\Dx]/\Dx^2 = \Apolar{F_{\lambda}}$,
        but for $t = 0$ we have $F_{0} = 0$, so the fiber is \emph{not} the
        apolar algebra of $F_0$.
    \end{example}

    Intuitively, we think of $\Apolar{F}$ as a family of apolar algebras
    $\Apolar{F_t}$, where $t\in \Spec A$. But
    Example~\ref{ex:notionofflatness} shows that the fiber of $\Apolar{F}$
    over a point $t$ may differ from $\Apolar{F_t}$.
     To avoid such pathology we
    introduce an appropriate notion of continuity of apolar family
    $\Spec \Apolar{F}$. This notion, among other things, will guarantee that
    the fibers are apolar algebras of corresponding restrictions of $F$.

    \begin{defn}\label{ref:flatness:def}
        An apolar family $\Spec \Apolar{F}$ is \emph{flatly embedded} if the
        $A$-module $\PPA/(\SSA \hook F)$ is flat.
    \end{defn}
    Note that if an apolar family is flatly embedded, then $\SSA \hook F$ is
    a flat $A$-module, thus also the isomorphic module $\Apolar{F}$ is flat.
    Note also that in pathological Example~\ref{ex:notionofflatness} the
    $A$-module $\SSA\hook F = \Apolar{F}$ \emph{is} flat. Thus the flatness of
    $\SSA\hook F$ is strictly weaker than being flatly embedded.

    If $\Spec\Apolar{F}$ is flatly embedded, then its fiber over $t\in \Spec
    A$ is isomorphic to $\Spec \Apolar{F_t}$, see
    Proposition~\ref{ref:apolarflatness:prop} below.

    Definition~\ref{ref:flatness:def} is very similar to the definition of a
    subbundle, where we require that the cokernel is locally free. The
    technical difference is that the $A$-module $\PPA/(\SSA \hook F)$ is
    usually very far from being finitely generated.

    \subsubsection{Local description}
    We state the two main results of this section below. On one hand, we show
    that each finite flat family with Gorenstein fibers is locally a flatly
    embedded apolar family. On the other hand, we give descriptions of such
    families and their constructions over a reduced base.

    \begin{prop}[Local description of
            families]\label{ref:localdescription:prop}
            Let $T$ be a locally Noetherian scheme.
            Let ${X \subset \WW \times T\to T}$ be an embedded
        finite flat family of Gorenstein algebras.
        This family is isomorphic, after possibly
        shrinking $T$ to a neighborhood of a given closed point, to a flatly
        embedded finite family $\ap{F}:\Spec \Apolar{F} \to \Spec A$ for $F\in \PPA$.
        Moreover
        \begin{enumerate}
            \item if all fibres are irreducible supported at $0\in \WW$ then we may take $F\in \SymA \WW$.
            \item if all fibres are invariant under the
                dilation action of $\kk^*$ and $T$ is \emph{reduced}, then we may take a
                \emph{homogeneous} $F\in \SymA \WW$.
        \end{enumerate}
    \end{prop}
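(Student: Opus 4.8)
The plan is to produce $F$ as a generator of the \emph{relative dualising module} of the family, mirroring the proof-sketch of Macaulay's theorem above in the relative setting, and to read flat embeddedness and the refinements \textup{(1)--(2)} off the structure of that module. After shrinking I may assume $T=\Spec A$ with $A$ Noetherian local and $\Spec A$ connected (the conclusions below, being of Nakayama type, then spread out to a neighbourhood of the given closed point). Write $X=\Spec R$ with $R$ finite flat — hence finite projective — over $A$, and let the chosen embedding present $R=\SSA/I$, where $\SSA=A[\Dx_1,\dots,\Dx_n]$. Set $\omega:=\Hom{A}(R,A)$. The surjection $\SSA\to R$ makes $\omega$ an $\SSA$-submodule of $\PPA=\Hom{A}(\SSA,A)$, namely the perpendicular $I^{\perp}=\{g\in\PPA\mid g(I)=0\}$, and the induced $\SSA$-action on it is the contraction of Formula~\eqref{eq:contraction}.

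The first step is to show that $\omega$ is free of rank one over $R$, after possibly shrinking $T$ again. Since $R$ is $A$-projective, $\Hom{A}(R,-)$ commutes with base change, so $\omega\tensor_{A}\kappa(t)\cong\Hom{\kappa(t)}(R_{t},\kappa(t))$ for the fibre $R_{t}:=R\tensor_{A}\kappa(t)$, and this is a free rank-one $R_{t}$-module because $R_{t}$ is artinian Gorenstein. I would lift a generator of $\omega\tensor_{A}\kappa(\mathfrak m_{A})$ to some $F\in\omega$; by Nakayama over the local ring $A$ it generates $\omega$ as an $R$-module near the closed fibre, so shrink $T$ accordingly. The $R$-linear surjection $\phi\colon R\to\omega$, $r\mapsto r\hook F$, has kernel finite over $A$, and tensoring with $\kappa(t)$ — left exact here since $\omega$ is $A$-flat — gives a surjection $R_{t}\to\omega\tensor\kappa(t)$ between $\kappa(t)$-vector spaces of equal dimension, hence an isomorphism; thus $\ker\phi$ has empty support and vanishes. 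So $\omega\cong R$, whence $\Ann{F}=\ann{\SSA}{\omega}=\ann{\SSA}{R}=I$, so $\Apolar{F}=R$ with its given embedding, and $\SSA\hook F=\SSA\cdot F=\omega=I^{\perp}$.

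Flat embeddedness is then immediate: since $R$ is $A$-projective, the exact sequence $0\to I\to\SSA\to R\to 0$ splits over $A$, so its $A$-dual $0\to I^{\perp}\to\PPA\to\Hom{A}(I,A)\to 0$ realises $\PPA/(\SSA\hook F)$ as a direct summand of $\PPA=\Hom{A}(\SSA,A)$, which is a product of copies of $A$ and hence $A$-flat because $A$ is Noetherian; a direct summand of a flat module is flat, giving Definition~\ref{ref:flatness:def}. For part~\textup{(1)}, if every fibre is supported at $0\in\WW$ then every prime of $R$ contains the image of $\mathfrak m_{0}=(\Dx_{1},\dots,\Dx_{n})$ — this holds fibrewise — so $\mathfrak m_{0}R$ lies in the nilradical of the Noetherian ring $R$ and is nilpotent; hence $\mathfrak m_{0}^{N}\subseteq I=\Ann{F}$ for some $N$, and by Formula~\eqref{eq:contraction} this forces $F$ to have no monomial of degree $\geq N$, i.e.\ $F\in\SymA\WW$. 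For part~\textup{(2)}, dilation-invariant fibres are in particular supported at $0$ and are cut out by homogeneous ideals; since $A$ is reduced and $R$ is finite locally free, $R$ embeds into $\prod_{\eta}R_{\eta}$ over the generic points $\eta$ of $\Spec A$, and intersecting with these homogeneous fibre ideals shows that $I$ itself is homogeneous. Then $R$ is a non-negatively graded $A$-algebra with $R_{0}=A$ and finitely many graded pieces, each finite projective over $A$; $\omega$ is a graded $R$-module whose top degree $d$ is constant along $\Spec A$ (constant Hilbert function), and repeating the first step with graded Nakayama yields a homogeneous generator $F$ of $\omega$, which under $\omega=I^{\perp}\subset\PPA$ is a homogeneous element of $\Sym^{d}_{A}\WW\subset\SymA\WW$.

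I expect the decisive step to be the freeness of $\omega$ over $R$: one must combine base change for $\Hom{A}(R,-)$, the classical fibrewise fact $\Hom{\kappa(t)}(R_{t},\kappa(t))\cong R_{t}$, and Nakayama, and then separately verify that the \emph{non}-finitely-generated module $\PPA/(\SSA\hook F)$ is $A$-flat — it is this, not mere $A$-flatness of $\SSA\hook F$, that the term \emph{flatly embedded} demands (cf.\ Example~\ref{ex:notionofflatness} and the remarks after Definition~\ref{ref:flatness:def}). Proving that $I$ is homogeneous over a reduced base, needed for part~\textup{(2)}, is the other point requiring care.
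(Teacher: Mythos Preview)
Your proof is correct and follows essentially the same route as the paper's: produce $F$ as a generator of the relative dualising module $\omega=\Hom{A}(R,A)\subset\PPA$ via Nakayama, deduce $\Ann{F}=I$ from $\omega\cong R$, obtain flat embeddedness by splitting $0\to I\to\SSA\to R\to 0$ over $A$ and dualising (using that $\PPA$ is $A$-flat for Noetherian $A$), and handle Points~1--2 by nilpotence of $\mathfrak m_0 R$ and by homogeneity of $I$ over a reduced base. The only cosmetic differences are that for Point~2 you verify homogeneity of $I$ by embedding $R\hookrightarrow\prod_\eta R_\eta$ over the generic points, whereas the paper checks vanishing of each homogeneous component fibrewise and invokes reducedness directly; both arguments are equivalent.
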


    \begin{example}
        Let $\WW = \mathbb{A}^1 = \Spec \kk[x]$. Consider a branched double cover
        \[X = \Spec \frac{\kk[\Dx, t]}{(\Dx^2 - t)} \to T = \Spec \kk[t].\]
        Then $X_0 = \Spec \kk[\Dx]/\Dx^2$ and $\omega_0$ is generated by $F_0 =
        x$. This $F_0$ lifts to an element $F = x\cdot \sum_{n\geq 0} (tx^2)^n$ of
        $\kk[t][[x]]$ and one checks that indeed
        $X = \Apolar{x\cdot \sum_{n\geq 0} (tx^2)^n}$. In this case no
        shrinking of $T$ is necessary.
    \end{example}

    Before we state the next result we explain one more piece of notation. Let
    $\varphi:A\to B$ be any homomorphism and $F\in \PPA$. By $\varphi(F)\in \PPB$ we
    denote the series obtained by applying $\varphi$ to coefficients of $F$.
    \begin{prop}\label{ref:apolarflatness:prop}
        Let $F\in \PPA$.
        Let $\ap{F}: \Spec \Apolar{F}\to \Spec A$ be an apolar family and
        suppose it is finite.
        Then the following holds:
        \begin{enumerate}
            \item If the apolar family is flatly embedded, then its base change via
                any homomorphism $\varphi:A\to B$ is equal to $\Spec \Apolar{\varphi(F)} \to
                \Spec B$. In particular, the fibre of $\ap{F}$ over $t\in
                \Spec A$ is naturally $\Spec \Apolar{F_t}$. Thus the
                length of $\Apolar{F_t}$ is independent of the choice of $t$.
            \item If $A$ is reduced and the length of $\Apolar{F_t}$ is
                independent of the choice of $t\in \Spec A$, then $\ap{F}$
                is flatly embedded.
        \end{enumerate}
    \end{prop}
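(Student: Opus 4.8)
The plan is to analyze the exact sequence of $A$-modules
\[
0 \to \SSA \hook F \to \PPA \to \PPA/(\SSA \hook F) \to 0,
\]
which underlies the definition of flatly embedded, and to exploit that the contraction action is $A$-linear, hence commutes with base change. First I would establish a base-change formula: for any $\varphi : A \to B$, applying $-\otimes_A B$ to the inclusion $\SSA \hook F \subset \PPA$ and using that $\PPA \otimes_A B \cong \PPB$ and $\SSA \otimes_A B \cong \SSB$ (these are free base changes), one gets a natural surjection $(\SSA \hook F) \otimes_A B \onto \SSB \hook \varphi(F)$, coming from $\sigma \hook F \mapsto \varphi(\sigma) \hook \varphi(F)$. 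The point of the flatly embedded hypothesis is that this surjection is in fact an isomorphism: flatness of $\PPA/(\SSA\hook F)$ means $\operatorname{Tor}_1^A(\PPA/(\SSA\hook F), B) = 0$, so tensoring the displayed sequence with $B$ stays exact, identifying $(\SSA\hook F)\otimes_A B$ with its image $\SSB\hook\varphi(F)$ inside $\PPB$. Since $\Apolar{F}\cong\SSA\hook F$ as $\SSA$-modules (and the isomorphism is compatible with the ring structure on the apolar algebra), this yields $\Apolar{F}\otimes_A B \cong \Apolar{\varphi(F)}$ as $B$-algebras, i.e.\ the base change of $\ap{F}$ along $\varphi$ is $\Spec\Apolar{\varphi(F)}\to\Spec B$. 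Taking $\varphi$ to be the residue map $A \to \kappa(t)$ gives the fibre statement, and finiteness plus flatness of the family forces the fibre length to be locally constant, hence constant on each connected component; I would remark that one may reduce to the connected case or simply state local constancy.

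For part (2), assume $A$ reduced and $\ell(\Apolar{F_t})$ constant. I want to show $\PPA/(\SSA\hook F)$ is flat. The module $M := \SSA\hook F$ is a submodule of $\PPA$, which is a (huge) free $A$-module, so $M$ is torsion-free; but that is not enough. Instead I would argue fibrewise: finiteness of $\ap{F}$ means $\Apolar{F}$ is a finite $A$-module, and by the surjection from part (1)'s argument (which holds without the flatly-embedded hypothesis) $\Apolar{F}\otimes_A\kappa(t)$ surjects onto $\Apolar{F_t}$; constancy of the length $\dim_{\kappa(t)}\Apolar{F_t}$ together with the generic-freeness / Grothendieck criterion for flatness over a reduced Noetherian base (a finite module whose fibre dimension is locally constant over a reduced base is locally free) shows that $\Apolar{F}$ is locally free and that the surjection $\Apolar{F}\otimes_A\kappa(t)\onto\Apolar{F_t}$ is an isomorphism of the right dimension. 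Hence $\SSA\hook F$ is a locally free $A$-module and, more importantly, the formation of $\SSA\hook F$ commutes with base change to every point. Then the quotient $\PPA/(\SSA\hook F)$ has the property that $\operatorname{Tor}_1^A(\PPA/M, \kappa(t)) = \ker(M\otimes\kappa(t)\to\PPA\otimes\kappa(t))$ vanishes for all $t$, because $M\otimes_A\kappa(t)\to\PPA\otimes_A\kappa(t)=\PPB$ is injective (its image is exactly $\SSB\hook F_t$ and the source has the same dimension). Over a reduced Noetherian ring, vanishing of $\operatorname{Tor}_1(N,\kappa(t))$ at all points for $N=\PPA/M$ — combined with $N$ being a filtered colimit / the fact that $M$ is finitely generated inside each finite-dimensional piece — gives flatness of $N$; I would phrase this by passing to the finite-dimensional truncations $\PPA^{\le d}/(\SSA\hook F)^{\le d}$, each of which is a finitely generated $A$-module with locally constant fibre dimension over a reduced base, hence flat, and flatness passes to the colimit.

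The main obstacle is the last flatness argument in part (2): $\PPA$ and $\PPA/(\SSA\hook F)$ are not finitely generated, so one cannot apply the clean "locally constant fibre dimension over reduced base $\Rightarrow$ locally free" statement directly. The fix is the truncation trick — writing $\PPA = \varinjlip_d \PPA^{\le d}$ where $\PPA^{\le d}$ denotes polynomials of degree $\le d$ (a finite free $A$-module), observing that $(\SSA\hook F)\cap\PPA^{\le d}$ is a finite $A$-module whose fibrewise dimension is $\ell(\Apolar{F_t})$ minus the dimension of the part of $\Apolar{F_t}$ in degrees $>d$, which one must check is itself locally constant (it is, because $F\in\SymA\WW$ has bounded degree once the family is finite, so for $d$ large the truncation already captures everything; for small $d$ one uses that each graded-like piece has locally constant rank by the same Gorenstein-duality symmetry used earlier). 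Handling this bookkeeping carefully, and making sure the identification $M\otimes_A\kappa(t)\hookrightarrow\PPB$ really is injective, is where the real work lies; everything else is the $A$-linearity of contraction plus standard commutative algebra.
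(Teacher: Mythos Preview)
Your overall strategy---use flatness of the cokernel to get that $N\otimes_A B$ injects after base change, then identify the image with $S_B\hook\varphi(F)$---is exactly the paper's strategy. But the implementation has a genuine gap at the point where you write ``using that $\PPA\otimes_A B\cong\PPB$\ldots\ (these are free base changes)''. This is false: $\PPA=\prod_{d\ge0}(A\otimes_{\kk}\Sym^d W)$ is an \emph{infinite product}, not a direct sum, and infinite products do not commute with tensor products. In general $\PPA\otimes_A B\to\PPB$ is not surjective, and its injectivity is not obvious either. So from the exactness of $0\to N\otimes B\to \PPA\otimes B$ you cannot directly conclude that $N\otimes B\hookrightarrow\PPB$.

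Your proposed fix in part~(2), the degree truncation $\PPA=\varinjlim_d\PPA^{\le d}$, does not work for two reasons. First, $\PPA$ is the \emph{projective} limit of its degree truncations, not the inductive limit. Second, and more seriously, your claim that ``$F\in\SymA W$ has bounded degree once the family is finite'' is false: Example~\ref{ex:tangentvectors} in the paper gives $F=f\cdot\operatorname{exp}_{dp}(t\mathbf{w})$, a genuine power series, with $\Apolar{F}$ finite over $A$. So $N=\SSA\hook F$ need not sit inside any bounded-degree piece.

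The paper handles this with Lemma~\ref{ref:weierstrass:lem}: rather than truncating by degree, one uses finiteness of $N$ over $A$ to produce a finite set of monomials $\mathcal{M}$ such that the ``evaluation on $\mathcal{M}$'' map $\PPA\to\Hom{A}(A\langle\mathcal{M}\rangle,A)=:\PP_{A,N}$ is injective on $N$. This $\PP_{A,N}$ is finite free, it is a direct summand of $\PPA$, and---crucially---$B\otimes\PP_{A,N}\to\PPB$ is visibly injective for every $A\to B$. With $\PP_{A,N}$ replacing $\PPA$, both parts of your argument go through: for~(1), flatness of $\PP_{A,N}/N$ gives $N\otimes B\hookrightarrow\PP_{A,N}\otimes B\hookrightarrow\PPB$; for~(2), $\PP_{A,N}/N$ is finitely generated, so over reduced $A$ flatness is equivalent to constant fibre rank, which follows immediately from the length hypothesis. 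The missing idea is this replacement of $\PPA$ by a finite free summand adapted to $N$.
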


    \begin{example}\label{ex:tangentvectors}
        \def\divexp#1{\operatorname{exp}_{dp}(#1)}%
        \def\ww{\mathbf{w}}%
        \def\xx{\mathbf{x}}%
        Let $\ww\in \WW$ and denote $\divexp{\ww} := \sum_{i\geq 0} \ww^i\in \PP$.

        Pick a polynomial $f\in \Sym \WW$. It gives a subscheme $\Spec
        \Apolar{f} \subset \WW$ supported at zero. We will now construct a
        family which moves the support of this subscheme along the line
        $\spann{\ww} \subset \WW$. The line is chosen for clarity only, the same
        procedure works for arbitrary subvariety of $\WW$.

        For every linear form $\Dx\in \WW^*$ we have
        \[\Dx\hook (f\divexp{\ww}) = (\Dx\hook f)\divexp{\ww} + c\cdot
        f\divexp{\ww}
        = ((\Dx + c)\hook f)\cdot \divexp{\ww},\] where $c = \Dx\hook \ww\in \kk$.
        Hence for every polynomial $\sigma(\xx)\in \Sym \WW^*$ we have
        $\sigma(\xx)\hook (f\divexp{\ww}) = 0$ if and only if $\sigma(\xx +
        \ww)\hook f = 0$.
        It follows that $\Spec \Apolar{f\divexp{\ww}}$ is the translate of
        $\Spec \Apolar{f}$ by the vector $\ww$, in particular it is supported on $\ww$ and abstractly
        isomorphic to $\Apolar{f}$. Hence $\Apolar{f}$ and
        $\Apolar{f\divexp{\ww}}$ have the same
        length.
        By Proposition~\ref{ref:apolarflatness:prop}, the family $\Spec \Apolar{f\divexp{t\ww}}\to \Spec \kk[t]$ is
        flatly embedded and geometrically corresponds to deformation by moving the support of
        $\Spec\Apolar{f}$ along the line spanned by $\ww$.
        Its restriction to $\kk[\varepsilon] = \kk[t]/t^2$ gives $\Spec \Apolar{f
        + \varepsilon \ww f}$ corresponding to the tangent vector pointing
        towards this deformation.
    \end{example}

    \subsubsection{Proofs of Proposition~\ref{ref:localdescription:prop} and
    Proposition~\ref{ref:apolarflatness:prop}}

    \begin{proof}[Proof of Proposition~\ref{ref:localdescription:prop}]
        Consider the $\OO_X$-sheaf $\omega = \Homsheaf{T}(\pi_*\OO_X,
        \OO_T)$, where the multiplication is by precomposition.
        Since the map $X\to T$ is finite and flat, the base change to a fiber over $t\in
        T$ is equal to $\Hom{\kappa(t)}(\OO_{X_t}, \kappa(t))$, where
        $\kappa(t)$ is the residue field in $t$. This is the
        canonical module of the fibre, see \cite[Chapter 21]{Eisenbud}.
        Note also that $\OO_X$ and $\omega$ taken as $\OO_T$-sheaves are
        locally free of the same rank.

        Choose a $\kk$-point $0\in T$. The $\OO_{X_0}$-module $\omega_{|X_0} =
        \Hom{\kappa(0)}(\OO_{X_0}, \kappa(0))$ is principal because the fibre $X_0$
        is Gorenstein.  Choose a generator $F$ of this module and lift it to
        $\omega$. By Nakayama's Lemma, after possibly shrinking $T$, we may assume
        that $F$ generates $\omega$ as an $\OO_X$-module.
        We get a surjection $\OO_X \onto \OO_X\cdot F = \omega$. This is in
        particular a surjection of locally free $\OO_T$-sheaves of the same
        rank. Hence it is an isomorphism, so that
        $\omega$ is a trivial $\OO_X$-invertible sheaf, trivialised by
        $F$.

        Shrinking $T$ even further, we may assume that $T = \Spec A$ is affine.
        Now, $X \subset \WW \times T$ is embedded as a $T$-subscheme.
        Therefore $\omega \subset \Homsheaf{T}(\OO_{\WW \times T}, \OO_T)$ as
        $\OO_{\WW \times T}$-modules.
        Since $\omega$ is annihilated by the ideal $I_{X} \subset
        \OO_{\WW \times T}$ and is torsion free as the $\OO_{X}$-module,
        we have $\Ann{\omega} = I_X$.
        Furthermore $F\in H^0(X, \omega) \subset \Hom{A}(\SSA, A) = \PPA$
        generates $\omega$ as a $\OO_X$-module or as an $\OO_{\WW\times
        T}$-module, hence $\Ann{\omega} = \Ann{F}$ and $X = \Spec \Apolar{F}$
        as families embedded into $\WW \times T$.


        Since $X\to T$ is flat, it follows that $\Apolar{F}$ is $A$-flat. The
        sequence
        \begin{equation}\label{eq:apolar}
            0\to \Ann{F} \to \SSA \to \Apolar{F} \to 0
        \end{equation}
        shows that
        $\Ann{F}$ is also $A$-flat. Moreover the $A$-module $\Apolar{F}$ is flat and finitely
        generated, hence is projective, so that $\SSA  \simeq \Ann{F} \oplus
        \Apolar{F}$ as $A$-modules.  Since $A$ is Noetherian the module
        $\Hom{A}(\SSA, A)$ is flat by~\cite[4.47,
        p.~139]{Lam__rings_and_modules}.
        Then $\Hom{A}(\Ann{F}, A)$ is flat as well, as a direct summand of a flat module.
        Applying $\Hom{A}(-, A)$ to the sequence~\eqref{eq:apolar}, which
        splits, we get
        \[
            0\to \Hom{A}(\Apolar{F}, A)\to \Hom{A}(\SSA, A) \to
            \Hom{A}(\Ann{F}, A) \to 0.
        \]
        By definition $\Hom{A}(\SSA, A) = \PPA$ and $\Hom{A}(\Apolar{F}, A) =
        H^0(\omega)$, which it is
        generated by $F$. Therefore the sequence can be written as
        \[
            0\to \SSA\hook F \to \PPA \to
            \Hom{A}(\Ann{F}, A) \to 0.
        \]
        Hence $\Hom{A}(\Ann{F}, A)  \simeq \PPA/(\SSA \hook F)$ so this last
        module is $A$-flat. By definition, $\Apolar{F}$ is flatly embedded.
        This concludes the general part of the proof.

        \def\mm{\mathfrak{m}}%
        \def\mmX{\mathfrak{m}_{|X}}%
        Now we pass to the proof of Points 1.~and 2.

        Point 1. Suppose, that all fibres of $X\to T$ are supported at the origin of
        $\WW$.  Take the ideal $\mm$ of the origin in $\WW \times T$ and its
        restriction to $X$, denoted $\mmX$.
        Each fiber $X_t$ over a closed
        point is a scheme of length $d$ supported at the origin, so $\mmX^d$ is zero
        on each fiber of $X\to T$. Thus $\mmX^d$ is contained in the radical.
        This radical is nilpotent, as $X$ is Noetherian being finite over
        Noetherian $T$. Therefore $\mmX^{de}$ is zero for $e$ large enough.
        In other words, $\mm^{de}$ annihilates $F$
        obtained above, so in particular all monomials of degree at least $de$
        annihilate it. Hence the corresponding coefficients of $F$ are zero
        and so $F\in \Sym_{A}^{<de} \WW \subset \SymA \WW$. Therefore $F$ is a
        polynomial.

        Point 2.
        By assumption $A$ is reduced.
        We already know that $\mm^d$ lies in
        $\Ann{F}$, where $d$ is large enough (in fact, by reducedness $d =
        \deg(X\to T)$ is sufficient), so that $F\in \SymA \WW$. Consider the
        image of the ideal $\Ann{F}$ in $\SSA/\mm^d$.  We want to show that it
        is homogeneous. We know that it is homogeneous when restricting to
        each fiber.
        Pick an element of this ideal and its homogeneous coordinate modulo
        the ideal. Such coordinate vanishes on each fiber, hence is nilpotent.
        As $A$ is reduced, it is zero. This shows that $\Ann{F}$ is
        homogeneous. Therefore $\SSA\hook F$ is spanned by homogeneous
        elements.  Now we may pick a homogeneous generator of the fiber and
        lift it (after possibly shrinking as above) to a homogeneous generator
        of $\SSA\hook F$.
    \end{proof}

    Proposition~\ref{ref:localdescription:prop} underlies our philosophy
    in the analysis of Hilbert scheme: we think of $\PP_A$ as a local
    version of a versal family (no uniqueness though).

    \begin{remark}
        The above proof shows that, without restricting $T$,
        each finite flat embedded and Gorenstein family $\pi:X\to T$ is isomorphic to $\Spec_T \Apolar{\omega}\to T$,
        where $\omega \subset \OO_T\tensor \Sym \VV$ is an invertible
        sheaf. We will not use this fact.
    \end{remark}

    The $A$-module $\PPA$ is far from being finitely generated and this technical problem needs to be
    dealt with before the next proof is conducted. In this following
    Lemma~\ref{ref:weierstrass:lem} we show that we can harmlessly replace $\PPA$ by a suitable finitely generated
    $A$-submodule of $\PPA$.
    \begin{lem}\label{ref:weierstrass:lem}
        \def\Ncoh{\PP_{A, N}}%
        Let $N \subset \PPA$ be a $\SSA$-module, which is finitely generated
        as an $A$-module. Then there exist a finitely generated flat $A$-submodule
        $\Ncoh \subset \PPA$ and a projection $\PPA\onto \Ncoh$, such that
        \begin{enumerate}
            \item\label{it:inj} The composition $N\into \PPA \onto \Ncoh$ is injective.
            \item\label{it:flatness} The module $\Ncoh/N$ is flat over $A$ if
                and only if the module $\PPA/N$ is flat over $A$.
            \item\label{it:dirsum} For every homomorphism $A\to B$ the induced
                map $B\tensor \Ncoh \to
                \PPB$ is injective and its image is a direct summand of
                $\PPB$ as a $B$-module.
        \end{enumerate}
    \end{lem}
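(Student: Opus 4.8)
The plan is to trade the unwieldy, non-finitely-generated module $\PPA=\Hom{A}(\SSA,A)$ for the $A$-linear dual of a \emph{free} module-finite $A$-algebra through which the $\SSA$-action on $N$ factors. All three assertions will then fall out of applying $\Hom{A}(-,A)$ to a short exact sequence that splits exactly because the quotient algebra is free, together with the flatness of $\PPA$ over the Noetherian ring $A$, which was already invoked above via \cite[4.47]{Lam__rings_and_modules}.

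First I would fix generators $F_1,\dots,F_r$ of $N$ over $A$. Each $\alpha_i$ acts on $N$ by an $A$-linear endomorphism, so the Cayley--Hamilton (determinant) trick yields monic polynomials $\chi_i(T)\in A[T]$ of degree $r$ with $\chi_i(\alpha_i)\hook F=0$ for all $F\in N$, that is, $\chi_i(\alpha_i)\in\operatorname{Ann}_{\SSA}(N)$. Set $J:=\bigl(\chi_1(\alpha_1),\dots,\chi_n(\alpha_n)\bigr)\subseteq\SSA$ and $R_0:=\SSA/J$. Since the $\chi_i$ are monic in separate variables, $R_0$ is a free $A$-module on the monomials $\alpha_1^{a_1}\cdots\alpha_n^{a_n}$ with $0\leq a_i<r$. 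Hence the $\SSA$-submodule
$\PP_{A,N}:=J^{\perp}=\{f\in\PPA : f|_{J}=0\}\cong\Hom{A}(R_0,A)$
of $\PPA$ is free of rank $r^{n}$, in particular finitely generated and flat. As $J$ annihilates $N$ we get $N\subseteq J^{\perp}=\PP_{A,N}$; this is Point~\ref{it:inj}.

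Next, because $R_0$ is projective, $0\to J\to\SSA\to R_0\to 0$ splits as $A$-modules; choosing a complement $\SSA=J\oplus W$ and applying $\Hom{A}(-,A)$ gives $\PPA=W^{\perp}\oplus J^{\perp}=W^{\perp}\oplus\PP_{A,N}$, so the projection onto the second summand is the desired retraction $\PPA\onto\PP_{A,N}$. For Point~\ref{it:flatness}: $W^{\perp}$ is a direct summand of the flat module $\PPA$, hence flat, and since $N\subseteq\PP_{A,N}$ the decomposition descends to $\PPA/N\cong W^{\perp}\oplus(\PP_{A,N}/N)$; thus $\PPA/N$ is $A$-flat if and only if $\PP_{A,N}/N$ is. For Point~\ref{it:dirsum}: base-changing $0\to J\to\SSA\to R_0\to 0$ along $\varphi:A\to B$ keeps it split with $R_0\otimes_A B=\SS_B/J_B$ free over $B$ (the $\chi_i$ remain monic), and since $R_0$ is finite free the composite $B\otimes_A\PP_{A,N}\to\PPB$ identifies, via the canonical isomorphism $B\otimes_A\PP_{A,N}\cong\Hom{B}(R_0\otimes_A B,B)$, with the $B$-dual of the split surjection $\SS_B\onto R_0\otimes_A B$; hence it is injective with image the direct summand $J_B^{\perp}$ of $\PPB$.

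The one genuinely non-obvious move, and the main obstacle, is the choice of $\PP_{A,N}$. A naive truncation $\PP_{A,N}=\SymA^{\leq D}\WW$ does not work, because $N$ need not be contained in any finite-degree truncation of $\PPA$: for instance $\kk\cdot(1+x+x^{2}+\cdots)$ is a one-dimensional $\SSA$-submodule of $\kk[[x]]$ lying in no $\Sym^{\leq D}$, and there the image of $N$ in the truncation is a \emph{different} submodule, which destroys the direct-sum decomposition $\PPA/N\cong W^{\perp}\oplus(\PP_{A,N}/N)$ on which Point~\ref{it:flatness} relies. Taking instead the $A$-dual of $\SSA$ modulo the Cayley--Hamilton relations of the $\alpha_i$ on $N$ is precisely what secures the honest inclusion $N\subseteq\PP_{A,N}$ while keeping $\PP_{A,N}$ free and $\PPA/\PP_{A,N}$ flat; once that is set up, all three statements are formal.
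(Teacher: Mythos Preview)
Your argument is correct and complete, but it is genuinely different from the paper's. The paper does not produce an ideal $J\subset\operatorname{Ann}_{\SSA}(N)$ at all: it picks a finite set $\mathcal{M}$ of monomials spanning the image of $\SSA\to\Hom{A}(N,N)$, sets $\PP_{A,N}=\Hom{A}(A\langle\mathcal{M}\rangle,A)$, and embeds this in $\PPA$ via the polynomial section $\Hom{A}(A\langle\mathcal{M}\rangle,A)\hookrightarrow\SymA\WW$. In that construction $N$ is \emph{not} contained in $\PP_{A,N}$ as submodules of $\PPA$; only the projection $N\to\PP_{A,N}$ is injective, and the paper then identifies $N$ with its image and runs the Snake Lemma to get $\PPA/N\simeq K\oplus\PP_{A,N}/N$. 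Your Cayley--Hamilton construction instead produces $\PP_{A,N}=J^{\perp}$ as an honest $\SSA$-submodule of $\PPA$ that already contains $N$, so the decomposition $\PPA/N\cong W^{\perp}\oplus\PP_{A,N}/N$ is literal rather than up to an identification, and no diagram chase is needed. The trade-off is that the paper's argument is slightly more elementary (no determinant trick), while yours gives a more structured $\PP_{A,N}$ (an $\SSA$-submodule, dual to a free finite $A$-algebra) and handles exactly the power-series obstruction you diagnose in your last paragraph by design. Both routes rely on the same external input, the flatness of $\PPA$ over Noetherian $A$ from \cite[4.47]{Lam__rings_and_modules}.
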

    \begin{proof}
        \def\Ncoh{\PP_{A, N}}%
        \def\monos{\mathcal{M}}%
        \def\monosmod{A\langle \monos \rangle}%
        Since $N$ is finitely generated over $A$, also the $A$-module $\Hom{A}(N,
        N)$ is finitely generated.  The $\SSA$-module structure gives a map
        $\SSA \to \Hom{A}(N, N)$. Take a finite set of monomials in $\SSA$
        which generate the image as $A$-module and denote this set by $\monos$.
        For every monomial $m$ not in $\monos$ there are monomials $m_i\in
        \monos$ and coefficients $a_i\in A$ such that $m -
        \sum a_i m_i$ annihilates $N$.  Pick $f\in N$, then $(m - \sum a_i
        m_i)\hook f = 0$. But $(m - \sum a_im_i) \hook f$ is a functional,
        whose value on $1\in \SSA$ is $f(m - \sum a_im_i)$. Hence $f(m - \sum
        a_im_i) = 0$, so $f(m) = \sum a_i f(m_i)$. Thus the values of $f$ on monomials from $\monos$
        determine $f$ uniquely.
        In other words, the composition $N \into \PPA \to \Hom{A}(\monosmod, A)$
        is injective, where $\monosmod$ is the $A$-submodule of $\SSA$ with basis $\monos$.
        Let
        \[\Ncoh = \Hom{A}(\monosmod, A)\]
        with the projection $\PPA \to \Ncoh$ coming from restriction of
        functionals to $\monosmod$. Then Point~\ref{it:inj} follows by
        construction of $\Ncoh$.
        The restriction $\PPA\to \Hom{A}(\monosmod, A)$ has a natural section
        $\Hom{A}(\monosmod, A) \into \SymA \WW \subset \PPA$.
        We take this section to obtain an inclusion $\Ncoh \subset \PPA$.
        This section also proves that we have an inner direct product $\PPA
        \simeq \Ncoh \oplus K$ for an $A$-module $K$. Since $A$ is
        Noetherian the $A$-module $\PPA$ is \emph{flat} by~\cite[4.47,
        p.~139]{Lam__rings_and_modules}. Hence also $\Ncoh$ and $K$ are
        flat. Snake Lemma applied to the diagram
        \[
            \xymatrix{
                0 \ar[r] & N \ar[r]\ar[d]_{=} & \PPA \ar[d]\ar[r] & \PPA/N
                \ar[r]\ar[d] & 0\\
                0 \ar[r] & N \ar[r] & \Ncoh \ar[r] & \Ncoh/N \ar[r] & 0
            }
        \]
        gives an extension
        \[
            0\to K\to \PPA/N\to \Ncoh/N \to 0,
        \]
        whereas the projection $\PPA \to K$ trivialises it to a direct sum
        $\PPA/N \simeq
        \Ncoh/N \oplus K$.
        Since $K$ is flat over $A$ we get that $\Ncoh/N$ is flat over $A$ if and only if $\PPA/N$ is flat over
        $A$; this proves Point~\ref{it:flatness}.

        For the Point~\ref{it:dirsum} note that the homomorphism $B\tensor
        \Ncoh \to B\tensor \PPA\to \PPB$ sends $B\tensor \Hom{A}(\monosmod,
        A)$ isomorphically to the direct summand
        $\Hom{B}(B\langle\monos\rangle,
        B) \subset \PPB$, so this homomorphism is injective.
    \end{proof}

    \begin{proof}[Proof of Proposition~\ref{ref:apolarflatness:prop}]
        \def\PPAprim{\PP_{A, N}}%
        \def\SSB{\SS_B}%
        \def\len{\operatorname{len}}%

        The $A$-module $N = \SSA\hook F$ is finite by assumption. By
        Lemma~\ref{ref:weierstrass:lem} for $N = \SSA\hook F$ we get a module
        $\PP_{A, N}$.
        The Lemma says that:
        \begin{enumerate}
            \item $\PPAprim$ is flat and finite over $A$,
            \item The module $\PPA/N$ is flat if and only if the module
                $\PPAprim/N$ is flat,
            \item the homomorphism $B\tensor \PPAprim \to \PPB$ is injective.
        \end{enumerate}

        Proof of Point 1.
        Suppose that the apolar family is flatly embedded. Then $\PPAprim/N$
        is flat and $B\tensor N \to B\tensor \PPAprim$ is injective, so also
        $B\tensor N \to \PPB$ is injective. This morphism
        sends $F$ to $\varphi(F)\in \PPB$, hence sends $B\tensor N$ to $\SSB \hook \varphi(F)$.
        Thus $B\tensor N$ is isomorphic to $\SSB\hook \varphi(F)$, which in
        turn is isomorphic to $\Apolar{\varphi(F)}$.

        Proof of Point 2.
        As noted above, the family is flatly embedded if and only if
        $M := \PPAprim/N$ is flat.
        This module is finitely generated, hence it is
        flat if and only if it is locally free. Now $A$ is reduced, so this
        happens if and only if is has constant rank: the length of $M\tensor
        \kappa(t)$ is independent of the choice of $t\in \Spec A$. But this
        length is $\len \PPAprim - \len \SSB\hook \varphi(F) = \len \PPAprim -
        \len \Apolar{\varphi(F)}$. Hence
        this length is constant directly by assumption.
    \end{proof}




    \section{Analysis of the component $\Hilbshortother$}\label{ssec:proof}

    \def\Gtwosix{\operatorname{Gr}(2, \VV)}%
    \def\wedgetwo{\Lambda^2 \VV}%
    \def\wedgetwodual{\Lambda^2 \VV^*}%
    \def\cone#1{\operatorname{cone}(#1)}
    \def\GLV{\operatorname{GL}(\VV)}%
    \newcommand{\quotient}{\mathbin{\!/\mkern-4mu/\!}}%
    We now return to working over the field $\mathbb{C}$ with a fixed
    $6$-dimensional vector space $V$ and we use the notation
    stated in the introduction.

    We begin by defining the Iliev-Ranestad
    divisor $\DIR$. The Grassmannian $\Gtwosix\subset \mathbb{P}(\wedgetwo)$ is
    non-degenerate, arithmetically Gorenstein and of degree $14$. A
    $5$-dimensional projective subspace $\mathbb{P}W \subset
    \mathbb{P}(\wedgetwo)$ is \emph{admissible} if it does not intersect
    $\Gtwosix$. A general $\mathbb{P}W$ is admissible. For an admissible $\mathbb{P}W$
    the cone $Z = W \cap \cone{\Gtwosix}\subset \wedgetwo$ is a standard
    graded finite Gorenstein scheme $Z$ supported at the origin.
    For a general $\mathbb{P}^6$ containing a general admissible $\mathbb{P}W$, the
    intersection $\mathbb{P}^6 \cap \Gtwosix$ is a set of $14$ points and $Z$
    is a hyperplane section of the cone over these points, thus $Z$ is
    smoothable.
    Since $Z$ spans $\mathbb{A}^6$ and is of length $14$, one checks, using
    the symmetry of the Hilbert function, that $Z$ has Hilbert
    function $(1, 6, 6, 1)$. Therefore $Z =
    \Spec\Apolar{F_Z}$ for a cubic $F_Z\in \Sym^3 W^* \simeq \Sym^3 \mathbb{C}^6$,
    unique up to scalars and $Z$
    gives a well defined element $[F_Z]\in \cubicspace \quotient \GLV$.

    The set $\DIR \subset \cubicspace$
    is defined as the closure of the
    preimage of the set of such $[F_Z]$ obtained from all admissible
    $\mathbb{P}^5 = \mathbb{P}W$. It is called the \emph{Iliev-Ranestad}
    divisor, see~\cite{Ranestad_Voisin__VSP}.
    It is proven in~\cite[Lemma~1.4]{Ranestad_Iliev__VSPcubic} that $\DIR$ is a divisor
    and in \cite[Lemma~2.4]{Ranestad_Voisin__VSP} that $F$ lies in $\DIR$ if
    and only if it is apolar to a quartic surface scroll.

    \def\supp{\operatorname{supp}}%
    \def\Zzero{Z_0}%
    \def\Izero{I}%
    We will now rigorously prove several claims which together lead to the
    proof of Theorem~\ref{ref:mainthm:thm}. Our
    approach is partially based on the natural method of~\cite{CEVV}.
    Additional crucial steps are to prove that $\Hilbshort \setminus
    \Hilbshortzero$ is smooth and that $\Hilbshortzero \cap \Hilbshortother$
    is irreducible.

    In the first two steps we use the following abstract but trivial fact.
    \begin{lem}\label{ref:abstract:lem}
        Let $X$ and $Y$ be reduced, finite type schemes over $\mathbb{C}$. Let
        $X\to Y$, $Y\to X$ be two morphisms, which are bijective on closed
        points. If the composition $X\to Y\to X$ is equal to
        identity, then $X\to Y$ is an isomorphism.
    \end{lem}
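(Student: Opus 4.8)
The plan is to reduce the statement to a purely scheme-theoretic assertion about the two morphisms $f : X \to Y$ and $g : Y \to X$ with $g \circ f = \id_X$. First I would record the easy formal consequence: since $g \circ f = \id_X$, the morphism $f$ is a section of $g$, hence a closed immersion onto a closed subscheme $X' = f(X) \subseteq Y$ (here one uses that $X$ is separated, which holds as it is of finite type over $\mathbb{C}$), and $g$ restricted to $X'$ is an isomorphism $X' \xrightarrow{\sim} X$. Thus it suffices to show $X' = Y$ as schemes, i.e. that the closed immersion $f$ is surjective and defines the full structure sheaf. Surjectivity on the level of topological spaces is exactly the hypothesis that $f$ is bijective on closed points together with the fact that $Y$ is of finite type over a field (so its closed points are dense in every closed subset); hence $f$ is a closed immersion that is also surjective, so $X'$ and $Y$ have the same underlying space and $Y \to X' $ corresponds to a surjection of sheaves $\OO_Y \onto \OO_{X'} \cong f_*\OO_X$.

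The remaining point is to see that this surjection $\OO_Y \onto f_*\OO_X$ is injective, i.e. that the ideal sheaf $\mathcal{I}$ of $X'$ in $Y$ vanishes. Here I would use reducedness of $Y$ together with the fact that $g : Y \to X$ is bijective on closed points. The composition $f \circ g : Y \to Y$ has image $X'$ and restricts to the identity on $X'$; in particular $f \circ g$ is a retraction of $Y$ onto the closed subscheme $X'$. On closed points $f \circ g$ is a bijection $Y \to Y$ (being a composition of two bijections), so its image $X'$ contains every closed point of $Y$; since $Y$ is Jacobson (finite type over $\mathbb{C}$), $X'$ is topologically all of $Y$. Now $\mathcal{I}$ is a quasicoherent ideal sheaf on the reduced scheme $Y$ whose vanishing locus is all of $Y$, hence $\mathcal{I}$ is contained in the nilradical of $\OO_Y$, which is zero. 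Therefore $\mathcal{I} = 0$, $X' = Y$ as schemes, and $f$ is an isomorphism with inverse $g$.

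The main obstacle is the bookkeeping in the middle step: one must be careful that ``bijective on closed points'' plus ``finite type over $\mathbb{C}$'' really forces the closed immersion $f$ to be a homeomorphism onto $Y$, and not merely onto a dense subset — this is where the Jacobson property of finite-type $\mathbb{C}$-schemes is essential, since it guarantees that a closed subset containing all closed points is everything. Once the topological surjectivity is in hand, reducedness of $Y$ does the rest cheaply: a closed subscheme of a reduced scheme that is topologically the whole space is the whole scheme. Note that reducedness of $X$ is not actually needed for this argument; only separatedness of $X$ (to get that $f$ is a closed immersion) and reducedness of $Y$ are used.
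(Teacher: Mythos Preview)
Your argument follows the paper's line: both use that the image of $f$ contains every closed point, together with the Jacobson property and reducedness of $Y$, to conclude scheme-theoretic surjectivity, and both use the retraction $g\circ f=\id_X$ for the other half; you simply organize this by first observing that $f$ is a closed immersion, whereas the paper works directly with the sheaf map $\OO_Y\to i_*\OO_X$.

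One correction to your parenthetical: finite type over $\mathbb{C}$ does \emph{not} imply separated (the line with doubled origin is a counterexample), and the hypothesis needed for ``a section of $g$ is a closed immersion'' is separatedness of $g$ --- equivalently of $Y$ over $\mathbb{C}$ --- not of $X$. In the paper's applications all schemes in play are locally closed in projective space or in a Hilbert scheme, hence separated, so the gap is harmless there; but the justification as written should be fixed.
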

    \begin{proof}
        Denote the morphisms by $i:X\to Y$ and $\pi:Y\to X$.
        The scheme-theoretical image of $i$ contains all closed points, hence
        is the whole $Y$. Therefore the pullback of functions via $i$ is
        injective. It is also surjective, since the pullback via composition
        $\pi\circ i$ is the identity. Hence $i$ is an isomorphism.
    \end{proof}
     In our setting, $X$ is a subset of the Hilbert scheme, $Y$ a
     subspace of polynomials and the maps are constructed using relative
     Macaulay's inverse systems.

    \paragraph{First, we identify $\exceptional$ with an open subset of
    $\cubicspace$.}
    We prove that associating to a general enough degree three polynomial its apolar
    algebra induces an isomorphism from $\cubicspacereg$ to
    $\exceptional$.

    \def\Funiv{\mathbf{F}}%
    \def\OO{\mathcal{O}}%
    \def\polyspacereg{\Sym^{\leq 3}_{max} \VV}%
    \def\cubiccone{\Sym^{3}_{max} \VV}%
    \begin{construction}\label{ref:dominatingcubics:constr}
        Let $i:\polyspacereg \into \Sym^{\leq 3} \VV$ denote the set of $F$ such that
        $\dim \left(\Sym V^*\hook F\right)$ is equal to $1 + 6 + 6 + 1 = 14$. Since $14$ is maximal
        possible, this set is open.
        Recall that by Remark~\ref{ref:associatedgraded:rem} a degree three polynomial $F$ lies in $\polyspacereg$ if and only if its
        leading form lies in $\polyspacereg$. Hence $\polyspacereg =
        \Sym^{\leq 2} \VV \times \cubiccone$.

        Denote for brevity $U = \polyspacereg$ and by $\Gamma(U)$ the global
        coordinate ring of this (affine) subvariety.
        Take an universal cubic $\Funiv\in \Gamma(U)\tensor \Sym^{\leq 3} \VV$,
        so that for every $u\in U$ we have $\Funiv(u) = i(u)$.
        Then by Proposition~\ref{ref:apolarflatness:prop} the family
        $\ap{\Funiv}:\Spec \Apolar{\Funiv} \to U = \polyspacereg$ is flatly
        embedded and gives a morphism
        \[
            \varphi:\polyspacereg \to \Hilbshort
        \]
        to the Gorenstein locus of the Hilbert scheme. The points
        of the image are just $\Spec \Apolar{\Funiv(u)}$, where $\Funiv(u)\in U$.  By
        Macaulay's inverse systems they correspond precisely to irreducible
        schemes with Hilbert function $(1, 6, 6, 1)$ and supported at the origin.

        Let $\mu:\Sym^{3} \VV \dashrightarrow \cubicspace$ be the
        projectivisation.
        Denote $\mu(U)$ by $\cubicspacereg$.
        The restriction of $\varphi$ to homogeneous cubics factors through $\mu$ and gives
        a morphism
        \[\varphi: \cubicspacereg \to \exceptional,\] which is bijective on
        points.
    \end{construction}
    \begin{claim}\label{claim:exceptional}
        The map $\varphi:\cubicspacereg \to \exceptional$ is an isomorphism.
    \end{claim}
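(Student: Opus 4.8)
The plan is to apply Lemma~\ref{ref:abstract:lem} with $X = \cubicspacereg$ and $Y = \exceptional$. We already have the morphism $\varphi:\cubicspacereg \to \exceptional$ from Construction~\ref{ref:dominatingcubics:constr}, which is bijective on closed points, and both schemes are reduced of finite type over $\mathbb{C}$ (the source being open in a projective space, the target by definition of its reduced structure). So it suffices to produce a morphism $\pi_{\exceptional}:\exceptional \to \cubicspacereg$ going the other way such that $\pi_{\exceptional}\circ\varphi = \id$. On closed points this $\pi_{\exceptional}$ must send $[\Spec\Apolar{F_3}]$ back to $[F_3]$, so the content is purely in constructing it as a morphism of schemes, i.e. in family form.

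First I would produce this map locally over $\exceptional$ using Proposition~\ref{ref:localdescription:prop}. By definition $\exceptional$ parametrises finite subschemes $Z\subset \VV$ supported at the origin and invariant under the $\mathbb{C}^*$-dilation, all with Hilbert function $(1,6,6,1)$; since $\exceptional$ is reduced, part~(2) of Proposition~\ref{ref:localdescription:prop} applies and the universal family over $\exceptional$ is, locally on an affine open $\Spec A \subset \exceptional$, of the form $\Spec\Apolar{F}\to \Spec A$ for a \emph{homogeneous} $F\in \SymA\VV$. Because every fibre has Hilbert function $(1,6,6,1)$, this $F$ is a family of homogeneous cubics, and by Remark~\ref{ref:uniquenessofcubics:rem} (in the graded case, just homogeneity) $F$ is determined up to an $A^\times$-scalar — that is, $F$ defines a section of a line bundle, hence a morphism $\Spec A \to \mathbb{P}(\Sym^3\VV)$ landing in $\cubicspacereg$ since each fibre has maximal apolar Hilbert function. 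These local morphisms agree on overlaps (two choices of homogeneous generator of the same invertible sheaf differ by a unit), so they glue to the desired $\pi_{\exceptional}:\exceptional \to \cubicspacereg$. Composing with $\varphi$ recovers the identity on points, and since $\cubicspacereg$ is reduced this forces $\pi_{\exceptional}\circ\varphi = \id$ as morphisms; Lemma~\ref{ref:abstract:lem} then finishes the argument.

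I expect the main obstacle to be the bookkeeping in the gluing step: showing that the locally defined homogeneous cubic $F$, which a priori depends on the choice of generator of the invertible canonical sheaf and on coordinates, genuinely descends to a morphism to projective space independent of these choices — essentially that $\Apolar{-}$ sets up a well-defined inverse on the level of families, not just points. This is where one must invoke that $\exceptional$ carries its \emph{reduced} structure (so that nilpotent ambiguities disappear, exactly as in the proof of Proposition~\ref{ref:localdescription:prop}(2)) and that the apolar ideal of a homogeneous cubic with maximal Hilbert function depends only on the cubic up to scalar. Once the inverse morphism is in hand, the application of Lemma~\ref{ref:abstract:lem} is immediate and the claim follows.
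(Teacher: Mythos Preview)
Your proposal is correct and follows essentially the same approach as the paper: use Proposition~\ref{ref:localdescription:prop}(2) on the reduced scheme $\exceptional$ to produce, locally, a homogeneous cubic $F$ giving a morphism to $\cubicspacereg$, and then invoke Lemma~\ref{ref:abstract:lem}. The paper is terser---it simply says the local $F$ ``is locally an inverse'' and applies Lemma~\ref{ref:abstract:lem}---whereas you spell out the gluing and the reason $\pi_{\exceptional}\circ\varphi=\id$ holds as morphisms; this extra care is welcome but does not constitute a different route.
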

    \begin{proof}
        Let $[Z]\in \exceptional$. Each fibre of the universal family
        over $\exceptional$ is $\mathbb{C}^*$-invariant thus the whole family is
        $\mathbb{C}^*$-invariant. By Local Description of Families
        (Proposition~\ref{ref:localdescription:prop}) near
        $[Z]$ this family has the form $\Spec \Apolar{F}
        \to \Spec A$ for some $F\in A\tensor \Sym^3 \VV$, so that
        $[F]$ gives a morphism $\Spec A \to
        \cubicspacereg$ which is locally an inverse to $\cubicspacereg \to
        \exceptional$. The claim follows from Lemma~\ref{ref:abstract:lem}.
    \end{proof}
    We will abuse notation and speak
    about elements of $\cubicspacereg$ being smoothable~etc.
    \def\Hilbotherred{\left( \Hilbshortother \right)_{red}}%
    \paragraph{Next, we construct the bundle $\Hilbotherred \to \exceptional$.}
    We now prove the following claim, which informally reduces the questions
    about $\Hilbshortother$ to the questions about $\exceptional$. Note that
    we will work on the reduced scheme $\Hilbotherred$, which eventually turns out
    to be equal to $\Hilbshortother$.
    \begin{claim}\label{claim:vectorbundle}
        $\Hilbotherred$ is a rank $21$ vector bundle over $\exceptional$ via a
        map $\pi:\Hilbotherred \to \exceptional$. This map on the level of
        points maps $[R]$ to $\Spec \gr H^0(R, \OO_R)$ supported at the origin
        of $\VV$. The schemes corresponding to points in the same fibre of
        $\pi$ are isomorphic.
    \end{claim}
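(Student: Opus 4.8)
The plan is to separate the translation direction from the graded part. First I would record that the additive group $\VV$ acts freely on $\Hilbshortother$ by translation, that the support morphism $\supp\colon\Hilbshortother\to\VV$ sending $[Z]$ to the reduced point $Z_{\mathrm{red}}$ is a morphism (every $Z$ in sight is irreducible, e.g.\ factor the Hilbert--Chow map through the small diagonal) and is $\VV$-equivariant, and that $w\mapsto\bigl(\supp(w),\,(-\supp(w))\cdot w\bigr)$ is an explicit inverse to the action map $\VV\times\supp^{-1}(0)\to\Hilbshortother$. Passing to reduced structures then gives $\Hilbotherred\cong\VV\times X_{0}$, where $X_{0}\subset\Hilbotherred$ is the reduced closed subscheme parametrising schemes supported at the origin. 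It thus suffices to realise $X_{0}$ as a rank $15$ vector bundle over $\exceptional$ with projection $[R]\mapsto[\Spec\gr R]$; adding the trivial $\VV$-factor then yields the rank $21$ bundle $\pi\colon\Hilbotherred\to\exceptional$, and $15+6=21$ agrees with $\dim\Hilbshortother-\dim\exceptional=76-55$.

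Next I would pin down the fibres using Remark~\ref{ref:uniquenessofcubics:rem}. Via the isomorphism $\varphi\colon\cubicspacereg\xrightarrow{\ \sim\ }\exceptional$ of Claim~\ref{claim:exceptional} I work over $\cubicspacereg\subset\mathbb{P}(\Sym^{3}\VV)$. Let $\mathcal{K}\subset\Sym^{2}\VV\otimes\mathcal{O}$ be the image of the contraction map of bundles $\VV^{*}\otimes\mathcal{O}(-1)\xrightarrow{\ \hook\ }\Sym^{2}\VV\otimes\mathcal{O}$; for $[F]\in\cubicspacereg$ the fibre map $\VV^{*}\otimes\langle F\rangle\to\Sym^{2}\VV$ has constant rank $\dim(\SS_{1}\hook F)=H_{\Apolar F}(2)=6$, so $\mathcal{K}$ is a rank $6$ subbundle and $\mathcal{E}:=\bigl((\Sym^{2}\VV\otimes\mathcal{O})/\mathcal{K}\bigr)(1)$ is a vector bundle of rank $21-6=15$ whose fibre over $[F]$ is $\Sym^{2}\VV/(\SS_{1}\hook F)$; the twist by $\mathcal{O}(1)$ encodes the simultaneous rescaling $F\mapsto\lambda F$, $G\mapsto\lambda G$ allowed by Remark~\ref{ref:uniquenessofcubics:rem}. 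On a chart of $\mathrm{Tot}(\mathcal{E})$ on which $\mathcal{O}(-1)$ is trivialised and the tautological coset is lifted we get an honest cubic-plus-quadric $F+G\in\Gamma\otimes\Sym^{\leq3}\VV$; by Remark~\ref{ref:uniquenessofcubics:rem} the submodule $\SS\hook(F+G)$, hence the apolar family, is independent of these choices, and by Proposition~\ref{ref:apolarflatness:prop} (each fibre $\Apolar{F+G}$ has length $14$ since the Hilbert function is maximal, and $\mathrm{Tot}(\mathcal{E})$ is reduced) it is flatly embedded, so the charts glue to a morphism $\psi\colon\mathrm{Tot}(\mathcal{E})\to\Hilbshort$. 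Every $\Apolar{F+G}$ is irreducible, supported at $0$, with Hilbert function $(1,6,6,1)$, hence lies in $X_{0}$ by the set-theoretic description of $\Hilbshortother$ from \cite{cjn13}, and since $\mathrm{Tot}(\mathcal{E})$ is reduced $\psi$ factors through $X_{0}$.

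Finally I would show $\psi\colon\mathrm{Tot}(\mathcal{E})\to X_{0}$ is an isomorphism by Lemma~\ref{ref:abstract:lem}, exactly as in the proof of Claim~\ref{claim:exceptional}. On closed points $\psi$ is bijective: any $[R]\in X_{0}$ equals $\Apolar F$ for a polynomial $F\in\Sym\VV$ of degree $\leq3$ (Macaulay), and $\Apolar F=\Apolar{F_{3}+F_{2}}$ because $\Sym^{\leq1}\VV\subset\SS\hook F$ when the Hilbert function is maximal, which gives surjectivity, while injectivity is the uniqueness clause of Remark~\ref{ref:uniquenessofcubics:rem}; consequently the morphism in the other direction will also be bijective on points. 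For that morphism, near a closed point of $X_{0}$ I apply Local Description of Families (Proposition~\ref{ref:localdescription:prop}, part~1) to write the universal family as $\Spec\Apolar F\to\Spec A$ with $F\in\Sym_{A}\VV$; the degree $3$ part $F_{3}$ is nowhere vanishing since every fibre has socle degree $3$, so $[F_{3}]$ defines a morphism $\Spec A\to\cubicspacereg$ and the pair $(F_{3},F_{2})$ a morphism $\Spec A\to\mathrm{Tot}(\mathcal{E})$; these are determined by the universal family, hence agree on overlaps and glue to $\rho\colon X_{0}\to\mathrm{Tot}(\mathcal{E})$ with $\rho\circ\psi=\id$. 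Lemma~\ref{ref:abstract:lem} then gives that $\psi$ is an isomorphism, so $X_{0}\cong\mathrm{Tot}(\mathcal{E})$ is a rank $15$ bundle over $\exceptional$; composing with the $\VV$-factor gives $\pi\colon\Hilbotherred\to\exceptional$, a rank $21$ bundle. On points $\pi([R])=[\Spec\gr R]$ supported at the origin because $\gr\Apolar F=\Apolar{F_{3}}$ (Remark~\ref{ref:associatedgraded:rem}); and two schemes in one fibre of $\pi$ have the same associated graded algebra, hence are isomorphic because $H=(1,n,n,1)$ implies $R\cong\gr R$ (see Section~\ref{sec:prelims}).

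The main obstacle is the last paragraph: extracting the pair $(F_{3},F_{2})$ algebraically from the Local Description, checking that the local sections glue to a genuine morphism $\rho$ with $\rho\circ\psi=\id$ on the nose, and choosing the $\mathcal{O}(1)$-twist in $\mathcal{E}$ so that $\psi$ and $\rho$ are mutually inverse as maps of schemes and not merely of sets (a bijective morphism from the smooth $\mathrm{Tot}(\mathcal{E})$ to the reduced $X_{0}$ need not be an isomorphism, so Lemma~\ref{ref:abstract:lem} is essential). Everything else --- the translation splitting, the fibre dimension count, and bijectivity of $\psi$ on points --- is routine given Remark~\ref{ref:uniquenessofcubics:rem}, Macaulay's theorem, and Propositions~\ref{ref:localdescription:prop}--\ref{ref:apolarflatness:prop}.
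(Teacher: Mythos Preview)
Your argument is essentially the paper's own proof: split off translation via the support map, then identify the fibre $X_0$ over the origin with a rank-$15$ bundle over $\cubicspacereg \simeq \exceptional$ using Remark~\ref{ref:uniquenessofcubics:rem}, Local Description for the inverse, and Lemma~\ref{ref:abstract:lem} to conclude. The only presentational difference is that you build $\mathcal{E}$ directly over $\cubicspacereg$ with a line-bundle twist, whereas the paper works with the rank-$28$ trivial bundle $\polyspacereg$ over the affine cone $\cubiccone$, quotients by the subbundle with fibre $(\Sym^{\geq 1}\VV^{*})\hook F_3$, and then projectivises; these yield the same object.

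One step you should make explicit in your construction of $\rho$. Proposition~\ref{ref:localdescription:prop}(1) gives $F \in \Sym_A \VV$, a polynomial, but does not bound its degree; you need $\deg F \leq 3$ before the pair $(F_3, F_2)$ determines the apolar family and hence before the local maps to $\mathrm{Tot}(\mathcal{E})$ glue. The paper handles this using reducedness of $A$: for each closed point $p$ the fibre $\Apolar{F(p)}$ has socle degree $3$, so $\deg F(p) = 3$ and $F_{\geq 4}(p) = 0$; since $A$ is reduced this forces $F_{\geq 4} = 0$ in $\Sym_A \VV$. You already invoke ``socle degree $3$'' to conclude that $F_3$ is nowhere vanishing, so the same observation (plus reducedness, which is exactly why one works on $\Hilbotherred$) finishes the job.
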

    \newcommand{\tr}[1]{\operatorname{tr}(#1)}%
    \begin{proof}
        \def\UU{\mathcal{U}}%
        First we recall the support map, as defined in~\cite[5A]{CEVV}.
    Consider the universal family $\UU\to \Hilbshort$, which is flat.
    The multiplication by $\VV^*$ on $\OO_{\UU}$ is
    $\OO_{\Hilbshort}$-linear. The relative trace of such multiplication
    defines a map
    $\VV^*\to H^0(\Hilbshort, \OO_{\Hilbshort})$, thus a morphism
    $\Hilbshort\to \VV$. We restrict this morphism to $\Hilbshortother \to
    \VV$ and compose it with multiplication by
    $\frac{1}{14}$ on $\VV$ to obtain a map denoted $\supp$. If $[Z]\in
    \Hilbshortother$ corresponds to a scheme supported at
    $v\in \VV$, then for every $v^*\in \VV^*$ the multiplication by $v^* -
    v^*(v)$ is nilpotent on $Z$, hence traceless. Thus on $Z$, we have $\tr{v^*} = \tr{v^*(v)}
    = 14v^*(v)$ and $\supp([Z]) = v$ as expected.

    The support morphism $\supp:\Hilbshortother \to V$ is $(V, +)$
    equivariant, thus it is a trivial vector bundle:
    \[\Hilbshortother  \simeq V \times \supp^{-1}(0).\]
    \def\Hilbshortfibre{\Hilbshortother^0}%
    Restrict $\supp$ to $\Hilbotherred$ and consider the fibre
    $\Hilbshortfibre := \supp^{-1}(0)$. Since $\Hilbotherred$
    is reduced, also $\Hilbshortfibre$ is reduced. We
    will now use this in an essential way. By Local
    Description~(Proposition~\ref{ref:localdescription:prop}) the universal
    family over this scheme locally has the form $\ap{F}:\Spec \Apolar{F} \to
    \Spec A$ for some $F\in A\tensor \Sym \VV$. For every $p\in \Spec A$ we
    have $\deg F(p)\leq 3$. In other words, $F_{\geq 4}(p) = 0$ for all points
    in $A$. Since $A$ is reduced, we have $F_{\geq 4} = 0$, so that
    $\deg F\leq 3$. Let $F_3$ be the leading form. The fibres of
    $\gr{\ap{F}}:\Spec \Apolar{F_3}\to \Spec A$ and $\ap{F}$ are isomorphic. Since $A$ is
    reduced, by Proposition~\ref{ref:apolarflatness:prop} the family
    $\gr{\ap{F}}$ is also flat and gives a morphism
    $\Spec A \to \exceptional$. These morphisms glue to give a morphism
    \[
        \gr:\Hilbshortfibre \to \exceptional.
    \]
    \newcommand{\tdf}{\operatorname{tdf}}%
    Now we prove that $\gr$ makes $\Hilbshortfibre$ a vector bundle
    over $\exceptional$. Let $U = \polyspacereg$. By Construction~\ref{ref:dominatingcubics:constr} we have a
    $\varphi:U \to \Hilbshortfibre$ which is a surjection on points.
    This surjection comes from a flatly embedded apolar family $\Spec
    \Apolar{\Funiv}\to U$, where $\Funiv\in \Gamma(U)\tensor \polyspacereg$ is a
    universal cubic.
    \def\EEker{\mathcal{K}}%
    \def\EE{\mathcal{E}}%
    \def\PEE{\mathbb{P}\EE}%
    \def\SSplus{\SS^{+}}%

    Let $[Z]\in \Hilbshortfibre$ and $u\in U$ be a point
    in the preimage. Then $Z = \Apolar{\Funiv(u)}$ and $\gr([Z])$ is apolar to the
    cubic form $[\Funiv(u)_3] = [\Funiv_3(u)]$.
    Therefore $\gr \circ \varphi(\Funiv(u)) = [\Funiv_3(u)]$. In this way
    $U = \polyspacereg$ becomes a trivial vector bundle of rank $1 + 6 +
    \binom{7}{2} = 28$ over the cone $\cubiccone$ over $\cubicspacereg$.
    We will prove that $\Hilbshortfibre$ is a projectivisation of a quotient bundle of this bundle.
    Take a subbundle $\EEker$ of $U$ whose fiber over $F_3\in \cubiccone$
    is $(\Sym^{\geq 1} \VV^*)\hook F_3$.
    Then as in Remark~\ref{ref:uniquenessofcubics:rem} we see that the family $\Spec\Apolar\Funiv$ over $U$
    is pulled back from the quotient bundle $U/\EEker$, which we denote by
    $\EE$. Hence also the associated morphism $U\to \Hilbshortfibre$ factors
    as $U\to \EE \to \Hilbshortfibre$.
    Finally we may projectivise these bundles:
    we replace the
    polynomials in $\EE$ by their classes, obtaining a bundle over
    $\cubicspacereg$ which we denote, abusing notation, by $\PEE$. The
    morphism $\EE\to \Hilbshortfibre$
    factors as $\EE\to \PEE \to \Hilbshortfibre$. Finally we obtain
    \[
        \bar{\varphi}:\PEE\to \Hilbshortfibre.
    \]
    It is bijective on points (Remark~\ref{ref:uniquenessofcubics:rem}).

    By the Local Description of Families for every $[Z]\in \Hilbshortfibre$ we
    have a neighborhood $U$ so that the universal family is $\Spec
    \Apolar{F}\to U$ for $F\in H^0(U, \OO_U)\tensor \polyspacereg$. Then $F$ gives a map $U\to
    \polyspacereg$, thus $U\to \PEE$. This is a local inverse of
    $\bar{\varphi}$. Hence by Lemma~\ref{ref:abstract:lem} the variety $\Hilbshortfibre$ is isomorphic to the bundle
    $\PEE$ over $\exceptional$.
    To prove Claim~\ref{claim:vectorbundle} we define $\pi$ to be the
    composition of projection $V \times \Hilbshortfibre \to \Hilbshortfibre$
    and $\gr$. Since the former is a \emph{trivial} vector bundle and the
    latter is a vector bundle the composition is a vector bundle as well.

    Finally note that $\pi([Z])$ is isomorphic to the scheme $\Spec \gr
    H^0(Z, \OO_Z)$, which in turn is (abstractly) isomorphic to $Z$ by the discussion of
    Section~\ref{sec:prelims}. Thence all the schemes corresponding to points
    in the same fibre are isomorphic.
\end{proof}

\begin{cor}\label{ref:divisorinlocus:cor}
    The locus $\Hilbshortother \cap \Hilbshortzero \subset \Hilbshortother$
    contains a divisor, which is equal to $\pi^{-1}(\DIR)$, where $\DIR
    \subset \cubicspacereg$ is the restriction of the Iliev-Ranestad divisor.
\end{cor}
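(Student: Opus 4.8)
The plan is to transfer the statement to the cubic form side via the vector bundle $\pi:\Hilbotherred \to \exceptional$ and Construction~\ref{ref:dominatingcubics:constr}, and then invoke the description of $\DIR$ from Section~\ref{sec:divs} together with the smoothability of the relevant schemes. First I would recall that by the construction of $\DIR$ (Section~\ref{sec:divs}), a general $\mathbb{P}^5 = \mathbb{P}W$ missing $\Gtwosix$ produces a standard graded Gorenstein scheme $Z = W \cap \cone{\Gtwosix}$ with Hilbert function $(1,6,6,1)$, hence $Z = \Apolar{F_Z}$ for a cubic $F_Z$, and by the choice of ``general $\mathbb{P}^6$'' this $Z$ is a hyperplane section of the cone over $14$ points, so $Z$ is smoothable. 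Via the identification $\exceptional \cong \cubicspacereg$ of Claim~\ref{claim:exceptional}, the point $[F_Z]$ lies in $\exceptional$, and because $Z$ is smoothable, the point $[Z] \in \Hilbshortother$ also lies in $\Hilbshortzero$. Thus the image of the generic locus defining $\DIR$ lands in $\Hilbshortzero \cap \Hilbshortother$.

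Next I would upgrade this from single points to a divisor. Since $\Hilbshortzero$ is closed in $\Hilbshort$ and $\Hilbshortother$ is irreducible of dimension $76$ while $\Hilbshortzero$ has dimension $84$, the intersection $\Hilbshortother \cap \Hilbshortzero$ is a proper closed subset of $\Hilbshortother$, hence of dimension at most $75$. On the other hand, $\DIR \subset \cubicspace$ is a divisor by \cite[Lemma~1.4]{Ranestad_Iliev__VSPcubic}, and since the complement of $\cubicspacereg$ in $\cubicspace$ has codimension greater than one, the restriction $\DIR \cap \cubicspacereg$ is a nonempty divisor in $\cubicspacereg \cong \exceptional$. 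As $\pi$ is a rank $21$ vector bundle, $\pi^{-1}(\DIR)$ is a divisor in $\Hilbotherred = \Hilbshortother$ of dimension $75$. The argument of the previous paragraph, applied to a general member of $\DIR$ (which is dense in $\DIR \cap \cubicspacereg$), shows that a general fibre $\pi^{-1}([F_Z])$ — or at least its general point — lies in $\Hilbshortzero$; since $\Hilbshortzero$ is closed and $\pi^{-1}(\DIR)$ is irreducible (being the preimage of the irreducible divisor $\DIR$ under a vector bundle projection), we conclude $\pi^{-1}(\DIR) \subseteq \Hilbshortother \cap \Hilbshortzero$.

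The one subtlety I would be careful about is that $\DIR$ is \emph{a priori} defined on $\cubicspace \quotient \GLV$ and pulled back, so I must ensure the scheme $Z = \Apolar{F_Z}$ used to witness smoothability genuinely corresponds to the point $[F_Z]$ after the identification $\exceptional \cong \cubicspacereg$; this is exactly the content of Claim~\ref{claim:exceptional} combined with the fact that the apolar algebra of a cubic with maximal Hilbert function depends (up to the $\GLV$-orbit, and indeed canonically on $\cubicspacereg$) only on the cubic form. I also need to know that a \emph{general} point of $\DIR$, not merely the special ones arising from $W \cap \cone{\Gtwosix}$, gives a smoothable apolar scheme: this follows because the smoothable locus in the Hilbert scheme is closed, so the locus of $[F] \in \cubicspacereg$ with $\Apolar{F}$ smoothable is closed, and it contains the dense subset of $\DIR \cap \cubicspacereg$ coming from admissible $\mathbb{P}^5$'s, hence contains all of $\DIR \cap \cubicspacereg$. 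The main obstacle is not in any single step but in assembling the dimension count cleanly: one must be sure that $\pi^{-1}(\DIR)$ is genuinely contained in — and not just shares a dense subset with — the (possibly larger, at this stage) intersection $\Hilbshortother \cap \Hilbshortzero$; irreducibility of $\pi^{-1}(\DIR)$ and closedness of $\Hilbshortzero$ close this gap. The statement as phrased only claims that $\Hilbshortother \cap \Hilbshortzero$ \emph{contains} this divisor, so I do not yet need the reverse inclusion (that is deferred to the proof of Theorem~\ref{ref:mainthm:thm}\eqref{it:intersection}).
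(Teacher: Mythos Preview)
Your argument has a genuine gap in passing from the base $\DIR\subset\exceptional$ to the total space $\pi^{-1}(\DIR)$. In the first paragraph you correctly show that for a general $[F_Z]\in\DIR$ the graded apolar scheme $Z=\Apolar{F_Z}$ is smoothable, i.e.\ the zero-section point $[F_Z]\in\exceptional\subset\Hilbotherred$ lies in $\Hilbshortzero$. But then you assert that ``a general fibre $\pi^{-1}([F_Z])$ --- or at least its general point --- lies in $\Hilbshortzero$'' without justification. What you have actually established is only that the \emph{zero section} over $\DIR$ lies in $\Hilbshortzero$; this zero section has codimension $21$ in $\pi^{-1}(\DIR)$ and is certainly not dense there, so the irreducibility-plus-closedness argument in your second paragraph does not close the gap.

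The paper's proof avoids this by invoking the final clause of Claim~\ref{claim:vectorbundle}: the finite schemes corresponding to points in the same fibre of $\pi$ are abstractly isomorphic. Since smoothability of a finite scheme is an intrinsic property (it depends only on the isomorphism class, not on the embedding), smoothability of the base point $[F_Z]$ immediately gives smoothability of every point of the fibre $\pi^{-1}([F_Z])$. This yields $\pi^{-1}(\DIR)\subset\Hilbshortother\cap\Hilbshortzero$ directly, with no appeal to irreducibility of $\DIR$ or density arguments. Your closure argument for passing from the ``general $\mathbb{P}^5$'' locus to all of $\DIR\cap\cubicspacereg$ is fine and is implicit in the paper as well; the missing ingredient is solely the fibrewise isomorphism statement.
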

\begin{proof}
    By its construction, the divisor $\DIR \subset \cubicspacereg  \simeq \exceptional$
    parametrises smoothable schemes. By Claim~\ref{claim:vectorbundle} also
    schemes in $\pi^{-1}(\DIR)$ are smoothable, hence $\pi^{-1}(\DIR)$ is contained in
    $\Hilbshortother\cap \Hilbshortzero$. Again by Claim~\ref{claim:vectorbundle} this preimage is divisorial in
    $\Hilbshortother$.
\end{proof}

    \paragraph{Now we prove that $\Hilbshortother\setminus\Hilbshortzero$ is
    smooth, so $\Hilbshortother$ is reduced.}
    Let $Z \subset V$ be a finite irreducible (locally) Gorenstein scheme with
    Hilbert function $(1, 6, 6, 1)$. Let $\SS := H^0(V, \OO_V) = \Sym V^*$ and
    $R = H^0(Z, \OO_Z)$, then $R = \SS/I$. The tangent space to $\Hilbshort$
    at $[Z]$ is isomorphic to $\Hom{\SS/I}(I/I^2, \SS/I)$. Since $Z$ is
    Gorenstein, this space is dual to $I/I^2$. Note that
    $Z$ is isomorphic to $\Zzero = \Spec \gr R$ and $[\Zzero]\in \exceptional$.

    \begin{claim}\label{claim:singularlocus}
    $\Hilbshortother \cap \Sing \Hilbshort = \Hilbshortother \cap
    \Hilbshortzero = \pi^{-1}(\DIR)$ as sets. Therefore $\Hilbshortother$ is reduced.
    Moreover $\Hilbshortother \cap \Hilbshortzero \subset \Hilbshortother$ is
    a prime divisor.
    \end{claim}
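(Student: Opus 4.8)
The plan is to analyze the singular locus of $\Hilbshort$ along $\Hilbshortother$ by a dimension count on tangent spaces, exploiting the vector bundle structure from Claim~\ref{claim:vectorbundle}. First I would recall that, since $\Hilbshortzero$ has dimension $84$ and $\Hilbshortother$ has dimension $76$, at any point $[Z]\in\Hilbshortother\setminus\Hilbshortzero$ the tangent space has dimension at least $76$, with equality exactly when $[Z]$ is a smooth point of $\Hilbshort$ lying on $\Hilbshortother$ only. So the key computation is: for $[Z]\in\Hilbshortother$ with $Z\cong\Zzero=\Spec\gr R$ and $[\Zzero]\in\exceptional$, compute $\dim_{\mathbb C}\Hom{\SS/I}(I/I^2,\SS/I)=\dim_{\mathbb C}(I/I^2)$. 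Because $Z\cong\Zzero$, the tangent space dimension depends only on $[\Zzero]\in\exceptional\cong\cubicspacereg$, i.e. only on the cubic form $F_3$ (up to $\GLV$). The strategy is to show this dimension is $76$ precisely off $\DIR$ and jumps on $\DIR$.

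Next I would set up the apolarity computation. Writing $R=\Apolar{F_3}$ with $H_R=(1,6,6,1)$ and $I=\Ann{F_3}$, the minimal free resolution of $R$ over $\SS$ is Gorenstein of length $6$, self-dual, and by the symmetry of the Hilbert function $I$ is generated in degrees $2$ and $4$ (the $\binom{7}{2}-6=15$ quadrics being $I_2=(F_3)_2^\perp$, plus some quartics). One computes $\dim(I/I^2)$ via the linear strand: $\dim(I/I^2)_d = \dim I_d - \dim (I^2)_d$. The generic value of this number is $76$; this is essentially the statement that a general cubic fourfold gives a smooth point of $\Hilbshortother$, which must be cross-checked against $\dim\Hilbshortother=76$ from \cite{cjn13}. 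The content then is the \emph{deformation-theoretic} identification: $[Z]$ is a singular point of $\Hilbshort$ iff $\dim T_{[Z]}\Hilbshort>76$, and one shows this happens iff $[Z]$ is a limit of smoothable schemes, i.e. iff $[Z]\in\Hilbshortzero$. The inclusion $\Hilbshortother\cap\Sing\Hilbshort\supseteq\Hilbshortother\cap\Hilbshortzero$ is automatic (a point on two components is singular); for the reverse I would argue that away from $\Hilbshortzero$ the scheme $\Hilbshort$ is smooth of dimension $76$ by the tangent space count, using that the function $[\Zzero]\mapsto\dim T\Hilbshort$ is upper semicontinuous and constantly $76$ on a dense open, and that any jump locus is exactly where extra (smoothing) deformations appear — which by Corollary~\ref{ref:divisorinlocus:cor} contains $\pi^{-1}(\DIR)$, and one must show it is not larger.

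To pin down that the jump locus is \emph{exactly} $\pi^{-1}(\DIR)$ and not bigger, I would use the vector bundle structure: since $\dim T_{[Z]}\Hilbshort$ only depends on $\pi([Z])\in\exceptional\cong\cubicspacereg$, the locus $\Hilbshortother\cap\Sing\Hilbshort$ is $\pi^{-1}$ of a closed subset $D\subseteq\cubicspacereg$ with $\DIR\subseteq D$. By Corollary~\ref{ref:divisorinlocus:cor}, $\pi^{-1}(\DIR)\subseteq\Hilbshortother\cap\Hilbshortzero\subseteq\Hilbshortother\cap\Sing\Hilbshort=\pi^{-1}(D)$, so it suffices to show $D\subseteq\DIR$, equivalently that every cubic in $D$ lies on the Iliev-Ranestad divisor. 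Here I expect the main obstacle: one needs a direct argument that if $\Apolar{F_3}$ is \emph{not} a smooth point of $\Hilbshort$ then $F_3\in\DIR$. The cleanest route is probably to show that $D$ is itself a divisor (the tangent-space jump locus is pure codimension one, via the structure of the Hilbert scheme near a Gorenstein point, or via an explicit determinantal description of the jump) together with an $\SL{V}$-invariance argument: $D$ is $\SL{V}$-invariant, and by the uniqueness statement quoted before Lemma~\ref{ref:intersectiondegree:lem} ($\DIR$ is the unique $\SL{V}$-invariant divisor of degree $10$) one identifies $D$ with $\DIR$ once one controls its degree; alternatively, invoke \cite[Lemma~2.7]{Ranestad_Voisin__VSP} to recognize the apolar quartic scrolls responsible for the extra smoothing deformations. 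Finally, primality of $\Hilbshortother\cap\Hilbshortzero$ follows because $\pi^{-1}(\DIR)$ is a vector bundle over the irreducible divisor $\DIR\subset\cubicspacereg$ (irreducibility of $\DIR$ being classical, \cite{Ranestad_Iliev__VSPcubic}), hence irreducible, and it is generically reduced inside the smooth (by this Claim) scheme $\Hilbshortother$, so it is a prime divisor; and reducedness of $\Hilbshortother$ is immediate once every point of $\Hilbshortother$ is shown to be a smooth point of the scheme $\Hilbshort$ in a neighborhood transverse to $\Hilbshortzero$, as the generic point of $\Hilbshortother$ is smooth and $\Hilbshortother$ was defined as a scheme-theoretic closure.
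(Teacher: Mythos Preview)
Your outline matches the paper's approach closely: reduce to $\exceptional$, compute tangent spaces, show the jump locus $D\subset\cubicspacereg$ is an $\SL{V}$-invariant divisor, and then identify $D$ with $\DIR$ by degree/uniqueness. The chain of inclusions $\DIR\subset\Hilbshortzero\cap\exceptional\subset\Sing\Hilbshort\cap\exceptional=D$ and the final deduction of primality and reducedness are exactly as in the paper.

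However, you gloss over the step that carries all the content: \emph{why} the tangent-space jump is cut out by a single determinant. You write that one should use ``an explicit determinantal description of the jump'' and then ``control its degree'', but you do not say what the determinant is. The paper's substance lies in two lemmas you omit. First (Lemma~\ref{ref:quartics:lem}), $I^2$ always annihilates the $6$-dimensional space $VF\subset\Sym^4 V$, so $H_{S/I^2}=(1,6,21,56,r,\ast)$ with $r\ge 6$. Second (Lemma~\ref{ref:onlyquatricsmatter:lem}), if $r=6$ then $\ast=0$ automatically; hence $\dim T_{[Z]}\Hilbshort=76$ iff $\dim(I^2)_4^\perp=6$. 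Only after this reduction does one get a morphism $ev:\Sym^2 I_2\to (VF)^\perp$ of rank-$120$ bundles over $\cubicspacereg$ whose determinant cuts out $D$; the degree $10$ is then read off on an explicit pencil (Lemma~\ref{ref:intersectiondegree:lem}), and primality comes from the absence of $\SL{V}$-invariants of degree $<10$. Without the degree-$4$ reduction, there is no single determinant to take, and neither divisoriality nor the degree computation goes through. Your alternative suggestion via quartic scrolls (\cite[Lemma~2.7]{Ranestad_Voisin__VSP}) would give $D\subset\DIR$ only if you could show every singular $[Z]$ admits an apolar quartic scroll, which is not obviously easier.
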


    Being a singular point of $\Hilbshort$ and lying in
    $\Hilbshortzero$ are both independent of the embedding of a finite scheme.
    Hence all three sets appearing in the equality of
    Claim~\ref{claim:singularlocus} are preimages of their images in
    $\exceptional$. Therefore it is enough to prove
    the claim for elements of $\exceptional$.

    Take $[\Zzero]\in
    \exceptional$  with corresponding homogeneous ideal $\Izero$.
    Take $F\in \Sym^3 \VV$ so that $\Izero = \Ann{F}$.
    The point $[\Zzero]$ is smooth if and only if $\dim \SS/\Izero^2 = 76 + 14 = 90$.
    Consider the Hilbert series $H$ of $\SS/\Izero^2$.
    By degree reasons $\Izero^2$
    annihilates $\Sym^{\leq 3} \VV$. We now show that it annihilates also a
    $6$-dimension space of quartics. Notabene, by
    Example~\ref{ex:tangentvectors} this space is the tangent space to
    deformations of $\SS/\Izero$ obtained by moving its support in $\VV$.
    \begin{lem}\label{ref:quartics:lem}
        The ideal $\Izero^2$ annihilates the space $\VV\cdot F\subset \Sym^4
        \VV$.
    \end{lem}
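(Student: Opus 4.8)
The plan is to reduce the statement to a single \emph{Leibniz-type identity} for the contraction action, after which the lemma follows by a one-line computation that uses nothing about $F$ except that $\Izero=\Ann{F}$. First I would record the identity: for every $v\in\VV$, every $\sigma\in\SS$ and every $G\in\Sym\VV$,
\[
    \sigma\hook(v\cdot G)\;=\;v\cdot(\sigma\hook G)\;+\;(\partial_v\sigma)\hook G,
\]
where $\partial_v\colon\SS\to\SS$ denotes contraction against $v$, that is, the unique derivation of $\SS=\mathbb{C}[\Dx_1,\dots,\Dx_n]$ with $\partial_v\Dx_i=\Dx_i\hook v\in\mathbb{C}$. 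Note that $\partial_v$ lowers degree by one and obeys the Leibniz rule $\partial_v(\sigma\tau)=(\partial_v\sigma)\tau+\sigma(\partial_v\tau)$. To prove the identity I would induct on $\deg\sigma$: the case $\sigma=\Dx_i$ is exactly the fact (recalled in Section~\ref{sec:prelims}) that $\Dx_i$ acts on $\Sym\VV$ as a derivation, together with $\Dx_i\hook v=\partial_v\Dx_i$; and the step $\sigma=\Dx_i\sigma'$ is a routine expansion of $\Dx_i\hook\bigl(\sigma'\hook(v\cdot G)\bigr)$ via the inductive hypothesis and the definition of $\partial_v$ on $\Dx_i\sigma'$. (We are in characteristic zero, so the divided-power product agrees with the ordinary one and this is literally the classical Leibniz rule.)

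Granting the identity, the lemma is immediate. An arbitrary element of $\Izero^2$ is a finite sum $\sum_k\sigma_k\tau_k$ with $\sigma_k,\tau_k\in\Izero$, so it suffices to show $(\sigma\tau)\hook(v\cdot F)=0$ for all $\sigma,\tau\in\Izero=\Ann{F}$ and all $v\in\VV$. Applying the identity with the operator $\sigma\tau$ and $G=F$ gives
\[
    (\sigma\tau)\hook(v\cdot F)\;=\;v\cdot\bigl((\sigma\tau)\hook F\bigr)\;+\;\bigl(\partial_v(\sigma\tau)\bigr)\hook F.
\]
Here $(\sigma\tau)\hook F=\tau\hook(\sigma\hook F)=\tau\hook 0=0$, so the first term vanishes; and, expanding $\partial_v(\sigma\tau)=(\partial_v\sigma)\tau+\sigma(\partial_v\tau)$ by the Leibniz rule,
\[
    \bigl(\partial_v(\sigma\tau)\bigr)\hook F\;=\;(\partial_v\sigma)\hook(\tau\hook F)\;+\;(\partial_v\tau)\hook(\sigma\hook F)\;=\;0,
\]
again because $\sigma$ and $\tau$ annihilate $F$. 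Hence $(\sigma\tau)\hook(v\cdot F)=0$, as required.

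I do not expect a genuine obstacle: the argument is short and purely formal, and in particular it never uses the hypothesis that $\Apolar{F}$ has Hilbert function $(1,6,6,1)$. The only point deserving care is the Leibniz-type identity itself — specifically, keeping straight that $\partial_v$ is the contraction operator on $\SS$ (dual to multiplication by $v$ on $\Sym\VV$), not something built from the ring structure of $\SS$; once that is set up correctly the rest is bookkeeping. In the surrounding argument this lemma is what supplies the $6$-dimensional subspace $\VV\cdot F\subset\bigl(\SS/\Izero^2\bigr)_4$, and, combined with the already-noted fact that $\Izero^2$ kills $\Sym^{\le 3}\VV$, it is one ingredient in the Hilbert-series computation of $\SS/\Izero^2$ that controls smoothness of $[\Zzero]$.
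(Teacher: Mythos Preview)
Your proof is correct, and the core idea --- repeated use of the fact that linear elements of $\SS$ act on $\Sym\VV$ as derivations --- is the same as the paper's. The organization differs: the paper applies the derivation rule one linear factor at a time to show that for $i\in I_2$ one has $i\hook(xF)\equiv x(i\hook F)=0\bmod \SS\hook F$, hence $i\hook(xF)\in\SS\hook F$ is killed by $I$, so $I\cdot I_2$ annihilates $xF$; the remaining graded pieces of $I^2$ are then dismissed ``by degree reasons'' (using $I_0=I_1=0$). You instead package the iteration once and for all into the identity $\sigma\hook(vG)=v(\sigma\hook G)+(\partial_v\sigma)\hook G$ and apply it to $\sigma\tau$ directly. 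This buys you a uniform argument that needs neither the degree-case analysis nor the hypothesis on the Hilbert function of $\Apolar{F}$, at the modest cost of first stating and proving the Leibniz identity for general $\sigma$. Both routes are short; yours is slightly more self-contained.
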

    \begin{proof}
        Let $\Dx\in \VV^*$ and $x\in \VV$ be linear forms. Then $\Dx$ acts on $\Sym \VV$
        as a derivation, so that $\Dx\hook (xF) = (\Dx\hook x) F + x(\Dx\hook
        F) \equiv x(\Dx \hook F) \mod \SS \hook F$.
        Take any element $i\in \Izero_2$ and write it as $i = \sum
        \beta_i\beta_j$ with $\beta_i$ linear. Then
        \[
            i\hook (xF) = \sum \beta_i \hook (\beta_j\hook xF) \equiv \sum
            x(\beta_i\beta_j \hook F) = x(i\hook F) = 0\mod (\SS \hook F).
        \]
        Therefore $i\hook (xF) \in \SS\hook F$, hence is annihilated by
        $\Izero$. This proves that $\Izero_2\Izero$ annihilates $xF$. Other
        graded parts of $I^2$ annihilate $xF$ by degree reasons.
    \end{proof}

    By Lemma~\ref{ref:quartics:lem} and the discussion above we have
    $H_{Z_0} = (1, 6, 21, 56, r, *)$ with $r\geq 6$.
    Therefore $\sum H_{Z_0} = 84 + r$ and this equals $90$ if and only if $r = 6$ and $*$ consists of zeros.
    Now we show that if $r = 6$ then $*$ consists of zeros.
    See~\cite[Lemma~2.31]{emsalem_iarrobino_small_tangent_space} for related
    statement.
    \begin{lem}\label{ref:onlyquatricsmatter:lem}
        Let $F\in \Sym^3 \VV$ and $\Izero = \Ann{F}\subset \SS$ be as above. Suppose that
        $\dim\left(\Izero^2\right)^{\perp}_4 = 6$. Then ${\Sym^5 \VV^* \subset
        \Izero^2}$. In particular $H_{S/\Izero^2} = (1, 6, 21, 56, 6, 0)$, so
        that the tangent space to $\Hilbshort$ at $[\SS/\Izero]$ has dimension $76$.
        As a corollary, 
        $[\SS/\Izero]$ is singular if and only if
        $\dim\left(\Izero^2\right)^{\perp}_4 > 6$.
    \end{lem}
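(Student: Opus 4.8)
The plan is to reduce the entire lemma to the single statement $\left(\Izero^2\right)^{\perp}_5 = 0$, equivalently $\Sym^5\VV^* = \left(\Izero^2\right)_5$. Granting this, $\left(\Izero^2\right)_d \supseteq \SS_{d-5}\left(\Izero^2\right)_5 = \SS_d$ for all $d\geq 5$, so $\SS/\Izero^2$ is supported in degrees $\leq 4$; its Hilbert function is $1,6,21,56$ in degrees $0,1,2,3$ because $\Izero$ — hence $\Izero^2$ — begins in degree $2$ (hence $4$) as $H_{\SS/\Izero}=(1,6,6,1)$, and is $\dim\left(\Izero^2\right)^{\perp}_4 = 6$ in degree $4$ by hypothesis. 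Thus $H_{\SS/\Izero^2}=(1,6,21,56,6,0)$, $\dim\SS/\Izero^2 = 90$, and the tangent space $\Hom{\SS/\Izero}(\Izero/\Izero^2,\SS/\Izero)$ — which is dual to $\Izero/\Izero^2$ because $\SS/\Izero$ is Gorenstein — has dimension $\dim\SS/\Izero^2 - \dim\SS/\Izero = 90-14 = 76$. The corollary is then immediate: without any hypothesis $\dim T_{[\SS/\Izero]}\Hilbshort = 70 + \dim\left(\Izero^2\right)^{\perp}_4 + \sum_{d\geq 5}\dim\left(\Izero^2\right)^{\perp}_d$, and the statement forces the tail to vanish exactly when $\dim\left(\Izero^2\right)^{\perp}_4 = 6$; comparing with $\dim\Hilbshortother = 76$ shows $[\SS/\Izero]$ is a smooth point of $\Hilbshort$ precisely when $\dim\left(\Izero^2\right)^{\perp}_4 = 6$.

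So the heart of the matter is $\left(\Izero^2\right)^{\perp}_5 = 0$. First I would record, using that $\Izero$ starts in degree $2$, that $\left(\Izero^2\right)_4 = \Izero_2\Izero_2$ and $\left(\Izero^2\right)_5 = \Izero_2\Izero_3$ (the a priori summand $\SS_1\Izero_2^2$ is absorbed since $\SS_1\Izero_2\subseteq\Izero_3$), and then dualise by apolarity using $\Izero_2^{\perp} = \SS_1\hook F$ (dimension $H(2)=6$) and $\Izero_3^{\perp} = \SS_0\hook F = \langle F\rangle$: a quintic $G$ lies in $\left(\Izero^2\right)^{\perp}_5$ iff $\Izero_2\hook G\subseteq\langle F\rangle$, and a quartic $H$ lies in $\left(\Izero^2\right)^{\perp}_4$ iff $\Izero_2\hook H\subseteq\SS_1\hook F$. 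Now fix $G\in\left(\Izero^2\right)^{\perp}_5$. For $f,f'\in\Izero_2$ the element $(ff')\hook G$ lies in $\Sym^1\VV$ and is killed by all of $\VV^*$ — because $\SS_1\cdot ff'\subseteq\left(\Izero^2\right)_5$ annihilates $G$ — hence $(ff')\hook G = 0$; therefore $(ff')\hook(\alpha\hook G) = \alpha\hook\big((ff')\hook G\big) = 0$ for every $\alpha\in\VV^*$, so $\VV^*\hook G\subseteq\left(\Izero^2\right)^{\perp}_4$. By hypothesis $\left(\Izero^2\right)^{\perp}_4 = \VV\cdot F$: Lemma~\ref{ref:quartics:lem} gives the inclusion "$\supseteq$", and $\dim\VV\cdot F = 6$ since $\Sym\VV$ is a polynomial ring over $\mathbb{C}$ and $F\neq 0$. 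Hence each $\alpha_i\hook G$ is a linear form times $F$, and Euler's identity $5G = \sum_i x_i(\alpha_i\hook G)$ shows $G = QF$ for a single quadric $Q\in\Sym^2\VV$.

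Then I would bootstrap. From $G = QF$ we get $\alpha_i\hook G = (\alpha_i\hook Q)F + Q(\alpha_i\hook F)$; since $\alpha_i\hook G$ and $(\alpha_i\hook Q)F$ lie in $\VV\cdot F$, so does $Q(\alpha_i\hook F)$, whence $Q\cdot(\SS_1\hook F)\subseteq\VV\cdot F$. Suppose $Q\neq 0$. Multiplication by $Q$ is injective, so $\dim Q\cdot(\SS_1\hook F) = \dim(\SS_1\hook F) = 6 = \dim\VV\cdot F$ and therefore $Q\cdot(\SS_1\hook F) = \VV\cdot F$. In particular both $x_1F$ and $x_2F$ are divisible by $Q$, so $Q \mid \gcd(x_1F,x_2F) = F$; write $F = QR$ with $R$ a nonzero linear form. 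Then $\VV\cdot F = Q\cdot(R\cdot\VV)$, so by injectivity $\SS_1\hook F = R\cdot\VV$, i.e.\ every partial $\alpha_j\hook F$ is divisible by $R$. But $\alpha_j\hook F = (\alpha_j\hook Q)R + (\alpha_j\hook R)\,Q$ with $\alpha_j\hook R\in\mathbb{C}$, so $R\mid(\alpha_j\hook R)\,Q$; since $R\nmid Q$ — otherwise $F = QR$ would be divisible by $R^2$, hence a cone, contradicting $[\SS/\Izero]\in\exceptional$ — we conclude $\alpha_j\hook R = 0$ for all $j$, i.e.\ $R = 0$, a contradiction. Hence $Q = 0$, so $G = 0$ and $\left(\Izero^2\right)^{\perp}_5 = 0$.

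Everything up to $G = QF$ is routine manipulation of the contraction action and Euler's identity. The one genuinely delicate point — which I expect to be the main obstacle — is the final step: $F$ need not be irreducible (for example $x_1(x_2^2+\cdots+x_6^2)$ has Hilbert function $(1,6,6,1)$), so one cannot appeal to irreducibility of $F$, and the contradiction has to be extracted from the equality $Q\cdot(\SS_1\hook F) = \VV\cdot F$ together with the fact that $F$ is not a cone, via unique factorisation in $\Sym\VV$.
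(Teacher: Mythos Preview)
Your proof is correct and takes a genuinely different route from the paper's. Both arguments begin identically: assume a nonzero $G\in(\Izero^2)^{\perp}_5$, identify $(\Izero^2)^{\perp}_4 = \VV F$ via the hypothesis and Lemma~\ref{ref:quartics:lem}, and establish $\VV^*\hook G\subseteq \VV F$. From there the paper first upgrades this inclusion to an equality $\VV^*\hook G=\VV F$ by a separate argument showing every linear form is a partial of $G$; it then computes $\SS_2\hook G\equiv \VV(\VV^*\hook F)\bmod\mathbb{C}F$, bounds $\dim \VV(\VV^*\hook F)\leq 7$, argues that any two quadrics in $\VV^*\hook F$ must share a linear factor, concludes $\VV^*\hook F=y\VV$ for a single linear $y$, and reaches the contradiction $\dim \VV(\VV^*\hook F)=21>7$.

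Your argument short-circuits both of these steps. Euler's identity applied to $\alpha_i\hook G=\ell_iF$ gives $G=QF$ immediately, without ever needing the equality $\VV^*\hook G=\VV F$; then the dimension match $Q\cdot(\VV^*\hook F)=\VV F$ forces $Q\mid F$, and the factorization $F=QR$ collapses via unique factorization and the non-cone hypothesis. This is more elementary and avoids the paper's common-factor endgame entirely. The only cost is the explicit reliance on the Leibniz rule and Euler's identity, which hold because $\alpha_i$ acts as a derivation on the divided-power ring and, over $\mathbb{C}$, this is the ordinary polynomial ring with differentiation; the paper's argument stays slightly closer to the raw contraction formalism but is longer. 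Your identification of the delicate point is accurate: the step $R\nmid Q$ really does need the non-cone hypothesis, since $R^2\mid F$ with $F$ cubic forces $F=R^2L$ with $L$ linear, whence the first partials span at most a two-dimensional space.
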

    \begin{proof}
        Suppose $\Sym^5 \VV^* \not\subset \Izero^2$ and take non-zero $G\in \Sym^5
        \VV^*$ annihilated by this ideal.
        By assumption $\dim\left(\Izero^2\right)^{\perp}_4 = 6$ and by
        Lemma~\ref{ref:quartics:lem} the $6$-dimensional space $\VV F$ is perpendicular to
        $\Izero^2$. Therefore $\left(\Izero^2\right)^{\perp} = \VV F$ and
        hence $\VV^*\hook G \subset \VV F$.

        We first show that all linear forms are partials of $G$, in other
        words that $\VV \subset \SS G$.
        Clearly ${0\neq \VV^* \hook G \subset \VV F}$.
        Take a non-zero $x\in \VV$ such that
        $xF$ is a partial of $G$.
        Let $W^* = \left(x^{\perp}\right)_1 \subset \VV^*$ be the
        space perpendicular to $x$.
        Let $xF = x^{e+1}\tilde{F}$, where $\tilde{F}$ is not divisible by
        $x$. Then there exists an element $\sigma$ of $\Sym W^*$ such that
        $\sigma\hook (xF) = x^{e+1}$. In particular $x\in \SS\hook (xF)$.
        Moreover $\SS_3 (xF) \equiv \SS_2 F = \VV \mod \mathbb{C}x$.
        Therefore, $\VV \subset \SS\hook G$, so $\VV = \SS_4\hook G$.
        By symmetry of the Hilbert function, $\dim \VV^* G = \dim \SS_4 G =
        6$. Since $\VV^* G$ is annihilated by $\Izero^2$, by comparing
        dimensions we conclude that
        \[\VV^* G = \VV F.\]

        Since $\Izero_3\hook(\Izero_2\hook G) = 0$ and $I_3^{\perp} =
        \mathbb{C}F$, we have $\Izero_2\hook G \subset
        \mathbb{C}F$, so $\dim \left(\SS_2\hook G + \mathbb{C}F\right) \leq 6 + 1= 7$.
        For every linear $\Dx\in \VV^*$ and $y\in \VV$ we have $\Dx\hook (yF)
        = (\Dx\hook y)F + y(\Dx\hook F) \equiv y(\Dx\hook F) \mod \mathbb{C}F$.
        Therefore we have
        \[
            \SS_2 \hook G = \VV^*\hook (\VV^*\hook G) = \VV^*\hook (\VV F)
            \equiv \VV(\VV^*\hook F)\mod
            \mathbb{C}F,
        \]
        thus $\dim \VV  (\VV^*\hook F) \leq 7$. Take any two quadrics $q_1,
        q_2\in \VV^* F$. Then $\VV q_1 \cap \VV q_2$ is non-zero, so that
        $q_1$ and $q_2$ have a common factor. We conclude that $\VV^* F =
        y\VV$ for some $y$, but then $\dim \VV(\VV^* F) = \dim y\Sym^2 \VV >
        7$, a contradiction.
    \end{proof}

    \def\Ffamily{\mathcal{F}}%
    \def\Itwobundle{\mathcal{I}_2}%
    \def\Singdiv{E}%
    As explained in Claim~\ref{claim:exceptional}, the map $[F]\to \Spec \Apolar{F}$ is an isomorphism
    $\cubicspacereg \to \exceptional$.
    Thus we may consider the statements $[\Zzero]\in \Hilbshortzero$ and
    $[\Zzero]$ is singular as conditions on the form $F\in \cubicspacereg$.
    For a family $\Ffamily$ of forms we get a rank $120$ bundle $\Sym^2\Itwobundle$ with
    an evaluation morphism
    \begin{equation}\label{eq:eval}
        ev:\Sym^2\Itwobundle \to (\VV \Ffamily)^{\perp} \subset \Sym^4 \VV^*.
    \end{equation}
    The
    condition $(\Izero^2)^{\perp}_4 > 6$ is equivalent to degeneration of $ev$
    on the fibre
    and thus it is \emph{divisorial} on $\cubicspacereg$. We now check
    that the associated divisor $\Singdiv = (\det ev = 0)= \Sing
    \Hilbshortother \cap \exceptional$ is prime of degree $10$.

    \begin{lem}\label{ref:intersectiondegree:lem}
        Fix a basis $x_0, \ldots ,x_5$ of $\VV$ and let $F = x_0x_1x_3 -
        x_0x_4^2 + x_1x_2^2 + x_2x_4x_5 + x_3x_5^2$. The line between $F$ and
        $x_5^3$ intersects $\Singdiv$ in a finite scheme of degree
        $10$ supported at $x_5^3$.
    \end{lem}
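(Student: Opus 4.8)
\emph{Setup.} The plan is to reduce the statement to an explicit computation with the evaluation morphism of~\eqref{eq:eval} restricted to the pencil. I would parametrize the line by $G_s := sF + x_5^3 \in \Sym^3\VV$, $s \in \mathbb{A}^1$, so that $G_0 = x_5^3$ is the point of the statement and $[F]$ is the point $s = \infty$. Recall that $\Singdiv = (\det ev = 0)$, where $ev$ is the morphism of rank-$120$ vector bundles in~\eqref{eq:eval}, and that by Lemma~\ref{ref:onlyquatricsmatter:lem} a cubic $[G]$ lies on $\Singdiv$ exactly when $\dim(\Ann{G}^2)^{\perp}_4 > 6$, i.e.\ when $[G]$ is a singular point of $\Hilbshort$. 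Since $\cubicspace \setminus \cubicspacereg$ has codimension at least two, $\det ev$ extends to a nonzero section of some $\mathcal{O}_{\cubicspace}(d)$ — a degree-$d$ form $D$ in the coefficients of a cubic, with $d = \deg\Singdiv$ — whose restriction to the line is a degree-$d$ form on $\mathbb{P}^1$. So it suffices to show that, in the coordinate $s$, this restriction equals $c\,s^{10}$ for some $c \neq 0$: then $d = 10$, and the intersection of $\Singdiv$ with the line is the length-$10$ scheme cut out by $s^{10}$, supported at $x_5^3$.

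\emph{The computation.} Over the dense open of the line where $[G_s] \in \cubicspacereg$ — i.e.\ $G_s$ is not a cone, which one checks holds at $s = 1$ after first verifying that $F$ itself is not a cone and has Hilbert function $(1,6,6,1)$ — the space $\Ann{G_s}_2 = \ker\bigl(\Sym^2\VV^* \to \VV,\ q \mapsto q\hook G_s\bigr)$ is $15$-dimensional, so $\Sym^2\Itwobundle$ and $(\VV G_s)^{\perp} \subset \Sym^4\VV^*$ are genuine rank-$120$ bundles and $ev$ is the multiplication map between them. To evaluate $\det ev$ along the line I would choose quadrics $g_1(s), \dots, g_{15}(s) \in \mathbb{C}[s]\tensor\Sym^2\VV^*$ spanning $\Ann{G_s}_2$ for $s \neq 0$ and still linearly independent at $s = 0$ — so that they span the limit $L_0 = \lim_{s \to 0}\Ann{G_s}_2$, which exists by the above and properness of $Gr(15, \Sym^2\VV^*)$ — expand the $120$ products $g_i(s)g_j(s)$ in a basis of $\Sym^4\VV^*$ adapted to $(\VV G_s)^{\perp}$ and regular and nondegenerate at $s = 0$, and collect the coefficients into a $120 \times 120$ matrix $M(s)$ over $\mathbb{C}[s]$. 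Up to the nonzero scalar coming from these choices, $\det M(s)$ is $D$ restricted to the line, and I expect the computation to return $\det M(s) = c\,s^{10}$ with $c \neq 0$; the nonvanishing of $c$ re-expresses the fact that $[F]$ is a smooth point of $\Hilbshort$ (Lemma~\ref{ref:onlyquatricsmatter:lem}), so in particular the line is not contained in $\Singdiv$.

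\emph{Main obstacle.} Both parts are finite computations in $\mathbb{C}[x_0,\dots,x_5]$ and $\mathbb{C}(s)[x_0,\dots,x_5]$, best carried out with a computer algebra system, and conceptually there is nothing deep; the one genuinely delicate point, which I expect to be the main obstacle, is getting the order of vanishing at $s = 0$ right. The source bundle degenerates there — $\dim\Ann{x_5^3}_2 = 20$, not $15$ — so a careless local trivialization, e.g.\ one obtained by clearing denominators from an $s$-dependent basis of $\Ann{G_s}_2$, introduces spurious powers of $s$ into the determinant and overstates the multiplicity at $x_5^3$. This is exactly why one must use a frame $g_1(s), \dots, g_{15}(s)$ that stays regular and of full rank at $s = 0$; with that precaution $\det M(s)$ genuinely represents $D$ near $s = 0$, so its order there is the intersection multiplicity at $x_5^3$, which one then computes to be $10$. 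Combined with the nonvanishing of $\det M(s)$ for $s \neq 0$, this shows the line meets $\Singdiv$ in a length-$10$ scheme supported at $x_5^3$, and in particular that $\deg\Singdiv = 10$.
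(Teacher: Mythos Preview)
Your approach is correct and is essentially the same strategy the paper uses: restrict the evaluation morphism~\eqref{eq:eval} to the pencil, choose a frame for the rank-$15$ source that stays of full rank at the degenerate point, and compute the $120\times 120$ determinant as a polynomial in the line parameter. The paper carries this out concretely by exhibiting the frame explicitly---fourteen constant quadrics (the space called ``$fixed$'') plus one quadric varying linearly in the projective coordinate $(u:v)$---and then reports that the determinant equals $u^{10}$ up to a unit; for the target it uses a fixed $6$-dimensional complement (quotienting by the span of $\VV x_5^3$ near $x_5^3$, and by $\VV x_3 x_5^2$ near $F$) rather than the moving $(\VV G_s)^{\perp}$, which sidesteps any $s$-dependence in the target trivialization. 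Your discussion of the ``main obstacle'' is exactly on point and matches what the paper's explicit frame is designed to avoid.
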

    \def\fixed{\mathrm{fixed}}%
    \begin{proof}
        \def\FF#1{F(#1)}
        First, one checks that every linear form is a partial of $F$, so that
        $F\in \cubicspacereg$.
        A minor technical remark: in this example we use contraction action. The same polynomials
        will work when considering partial differentiation action, but the
        apolar ideals will differ.
        Let
        \begin{align*}
            \fixed := \operatorname{span}(\alpha_0^2,\,
            \alpha_0\alpha_2,\, -\alpha_0\alpha_3+\alpha_2^2,\,
            &\alpha_0\alpha_4+\alpha_2\alpha_5,\,
            \alpha_0\alpha_5,\, \alpha_1^2,\, \alpha_1\alpha_2-\alpha_4\alpha_5,\,
            \alpha_1\alpha_3+\alpha_4^2,\\ &\alpha_1\alpha_4,\, \alpha_1\alpha_5,\,
            \alpha_2\alpha_3,\, \alpha_2\alpha_4-\alpha_3\alpha_5,\, \alpha_3^2,\,
        \alpha_3\alpha_4)
        \end{align*}
        be a $14$-dimensional space.
        Let $\FF{u, v} = uF + v x_5^3$. Then
        \[\fixed \oplus \mathbb{C}\left(v \alpha_3\alpha_5 + u \alpha_0\alpha_1 - u \alpha_5^2\right)
        \subset \Ann{F_2}\]
        and equality holds for a general choice of $(u:v)\in \mathbb{P}^1$.
        One verifies that the determinant of $ev$ restricted to this line is equal, up to unit, to $u^{10}$.
        This can be conveniently checked near $x_5^3$ by considering $\Sym^2 \Izero \to
        \Sym^4\VV^*/\VV x_5^3$ and near $F$ by $\Sym^2 \Izero \to
        \Sym^4\VV^*/\VV x_3x_5^2$. We note that the same equality holds in any
        characteristic other than $2,3$.
    \end{proof}

    \begin{remark}
        For $F$ as in Lemma~\ref{ref:intersectiondegree:lem} above $\Spec\Apolar{F}$ is an
        example of a non-smoothable scheme. Other examples may be easily
        obtained by choosing ``random'' degree three polynomials $F$.
    \end{remark}

    \begin{prop}\label{ref:divisorequality:prop}
        The divisor $\Singdiv = \Sing \Hilbshortother \cap \exceptional$ is
        prime of degree $10$.
        We have $\Singdiv = \Hilbshortzero\cap \exceptional = \DIR\cap
        \cubicspacereg$ as sets.
    \end{prop}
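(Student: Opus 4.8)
The plan is to read off the degree of $\Singdiv$ from the explicit computation in Lemma~\ref{ref:intersectiondegree:lem}, and then to trap $\Singdiv$ in a chain $\DIR\cap\cubicspacereg\subseteq\Hilbshortzero\cap\exceptional\subseteq\Singdiv$ whose two ends have the same closure, forcing all three sets to coincide.

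First I would settle the degree. By construction $\Singdiv$ is the zero scheme of a section $\det ev$ of a line bundle $\mathcal{L}$ on $\cubicspacereg$, where $ev$ is the morphism of rank $120$ bundles from~\eqref{eq:eval}. The complement of $\cubicspacereg$ in $\cubicspace$ (the cones) has codimension greater than one, so restriction gives $\operatorname{Pic}\cubicspacereg=\operatorname{Pic}\cubicspace=\mathbb{Z}$, whence $\mathcal{L}\simeq\OO(k)$ for some $k$, and by normality of $\cubicspace$ the section $\det ev$ is the restriction of a degree $k$ form $P$ on $\cubicspace$. Every component of $\{P=0\}$ is a hypersurface, hence meets $\cubicspacereg$ densely, so the closure $\overline{\Singdiv}\subseteq\cubicspace$ equals $\{P=0\}$, a hypersurface of degree $k$. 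To compute $k$ I would restrict to the line $\ell$ joining $[F]$ and $[x_5^3]$ of Lemma~\ref{ref:intersectiondegree:lem}: that lemma shows $\overline{\Singdiv}$ meets $\ell$ only at $[x_5^3]$, with multiplicity exactly $10$; in particular $\ell\not\subseteq\overline{\Singdiv}$, so $k=\operatorname{length}\left(\overline{\Singdiv}\cap\ell\right)=10$. (Since $[x_5^3]$ is a cone it lies outside $\cubicspacereg$, so $\ell$ in fact misses $\Singdiv$ itself; this is harmless, only the degree of $\overline{\Singdiv}$ being at stake.)

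Second I would assemble the inclusions. By Corollary~\ref{ref:divisorinlocus:cor} the schemes parametrised by $\DIR\cap\cubicspacereg$ are smoothable, so $\DIR\cap\cubicspacereg\subseteq\Hilbshortzero\cap\exceptional$. Conversely, a point $[\Zzero]\in\Hilbshortzero\cap\exceptional$ lies on both irreducible components of $\Hilbshort$, hence is a singular point of $\Hilbshort$; by Lemma~\ref{ref:onlyquatricsmatter:lem} this is equivalent to $\dim\left(\Izero^2\right)^{\perp}_4>6$, i.e.\ to $[\Zzero]\in\Singdiv$. Thus
\[
    \DIR\cap\cubicspacereg\ \subseteq\ \Hilbshortzero\cap\exceptional\ \subseteq\ \Singdiv.
\]
Taking closures in $\cubicspace$, and using that $\DIR$ is irreducible (it is the closure of the image of the irreducible family of admissible $\mathbb{P}W$'s) of degree $10$, we obtain $\DIR\subseteq\overline{\Singdiv}$ with $\DIR$ an irreducible degree $10$ hypersurface. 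Since $\overline{\Singdiv}$ is itself a degree $10$ hypersurface, its defining form is divisible by the irreducible form cutting out $\DIR$, and equality of degrees forces the two to agree up to a scalar. Hence $\overline{\Singdiv}=\DIR$; in particular $\overline{\Singdiv}$ is prime, $\Singdiv=\DIR\cap\cubicspacereg$ is a prime divisor of degree $10$, and the displayed inclusions collapse to $\Singdiv=\Hilbshortzero\cap\exceptional=\DIR\cap\cubicspacereg$, as asserted.

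The main obstacle lives entirely inside Lemma~\ref{ref:intersectiondegree:lem}: one must exhibit a single cubic $F\in\cubicspacereg$ and a line along which the $120\times120$ determinant $\det ev$ has a unique zero, of order exactly $10$ — a sizeable but finite calculation, most conveniently carried out at the two ends of the line by passing to $\Sym^2\Izero\to\Sym^4\VV^*/\VV x_5^3$ near $[x_5^3]$ and to $\Sym^2\Izero\to\Sym^4\VV^*/\VV x_3x_5^2$ near $[F]$. The only other external input is that $\DIR$ is an irreducible divisor of degree $10$; this is consistent with $\DIR$ being the unique $\SL{V}$-invariant divisor of degree $10$, because membership in $\Singdiv$ is a singularity condition on $\Hilbshort$, so $\overline{\Singdiv}$ is automatically $\operatorname{GL}(\VV)$-, hence $\SL{V}$-, invariant. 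One could instead try to avoid invoking the degree of $\DIR$ by proving directly that every $[\Apolar{F}]\in\Singdiv$ is apolar to a quartic scroll, cf.\ \cite[Lemma~2.7]{Ranestad_Voisin__VSP}, but this looks harder than the determinant computation.
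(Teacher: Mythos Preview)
Your argument is largely parallel to the paper's: both read the degree of $\Singdiv$ off Lemma~\ref{ref:intersectiondegree:lem} via a line restriction, and both use the chain $\DIR\cap\cubicspacereg\subseteq\Hilbshortzero\cap\exceptional\subseteq\Singdiv$. The substantive divergence is in how primality of $\Singdiv$ is obtained.

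The paper argues directly: $\Singdiv$ is visibly $\SL{V}$-invariant, and a computer check (LiE) shows that $(\Sym^{\bullet}\Sym^3 V^*)^{\SL{V}}$ has no nonzero elements of degree $<10$; hence the degree-$10$ form $P$ cutting out $\overline{\Singdiv}$ cannot factor nontrivially into invariants, so it is irreducible. Once $\Singdiv$ is known to be prime, the inclusion $\DIR\cap\cubicspacereg\subseteq\Singdiv$ of a nonempty divisor into an irreducible one forces set-theoretic equality throughout the chain; the degree of $\DIR$ is then a \emph{consequence}, not an input.

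You instead assume $\deg\DIR=10$ as an external input and deduce $\overline{\Singdiv}=\DIR$ by matching degrees. The difficulty is that this input is not established elsewhere in the paper, nor (to my knowledge) in the cited references: \cite{Ranestad_Iliev__VSPcubic} shows $\DIR$ is a divisor, and \cite{Ranestad_Voisin__VSP} gives the quartic-scroll characterisation, but neither computes its degree in $\cubicspace$. Your remark that this is ``consistent with $\DIR$ being the unique $\SL{V}$-invariant divisor of degree $10$'' is circular, since that uniqueness is itself a corollary of the present proposition. So as written there is a gap.

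The fix is short and brings you back to the paper's argument: since $\DIR$ is $\SL{V}$-invariant and irreducible, its defining polynomial is an $\SL{V}$-invariant (connected group, no nontrivial characters), hence has degree $\geq 10$ by the same LiE check; combined with $\DIR\subseteq\overline{\Singdiv}$ this gives $\deg\DIR\leq 10$, so $\deg\DIR=10$ and your argument goes through. But at that point you have used exactly the invariant-theoretic fact the paper uses, and the two proofs coincide.
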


    \begin{proof}
        Take two forms $[F_1], [F_2]\in\cubicspace$ and consider the
        intersection of $E$ with the line $\ell$ spanned by them.
        By Lemma~\ref{ref:intersectiondegree:lem} the restriction to $\ell$ of the evaluation morphism
        from~Equation \eqref{eq:eval} is finite of degree $10$.
        Hence also $\Singdiv$ is of degree $10$.

        Note that $E$ is
        $\SL{V}$-invariant. By a direct check, e.g.~conducted
        with the help of computer (e.g.~LiE), we see that there are no $\SL{V}$-invariant
        polynomials in $\Sym^{\bullet}\Sym^3 \VV^*$ of degree less than ten.
        Therefore $E$ is prime.
        Now smoothable schemes are singular, hence we have
        ${\DIR}_{|\cubicspacereg} \subset
        \exceptional\cap \Hilbshortzero  \subset E$. Since
        ${\DIR}_{|\cubicspacereg}$ is also a
        non-zero divisor, we have equality of sets.
    \end{proof}

    \begin{remark}
        \def\II{\mathcal{I}}%
        In the proof of Proposition~\ref{ref:divisorequality:prop} (and so
        Lemma~\ref{ref:intersectiondegree:lem}) we could
        avoid calculating the precise degree of the restriction of $ev$ to
        $\ell$, provided that we prove that $(\det ev = 0)_{\ell}$ is zero-dimensional. Namely,
        let $\II \subset \Sym^{\bullet} V^{*}$
        be the relative apolar ideal sheaf on $\ell$. We look at $\II_2$. By
        the proof of Lemma we have $\II_2  \simeq  \OO^{14} \oplus \OO(-1)$.
        Hence $\Sym^2 \II_2$ has determinant $\OO(-16)$.
        Similarly $V\Ffamily  \simeq \OO(-1)^{6}$, hence
        $(V\Ffamily)^{\perp} = \OO^{114} \oplus \OO(-1)^{6}$ and thus
        $\det ev_{|\ell}:\OO(-16)\to \OO(-6)$ is zero
        on a degree $10$ divisor. We conclude that $\Singdiv$ has degree $10$.
    \end{remark}
    \begin{proof}[Proof of Claim~\ref{claim:singularlocus}]
        Schemes corresponding to different elements in the fiber of $\pi$ are
        abstractly isomorphic. Therefore,
        we have $\Sing \Hilbshortother = \pi^{-1}\pi(\Sing \Hilbshortother)$
        and $\Hilbshortzero\cap \Hilbshortother =
        \pi^{-1}\pi(\Hilbshortzero\cap \Hilbshortother)$. Hence the equality
        in Proposition~\ref{ref:divisorequality:prop} implies the equality in
        the Claim. The reducedness follows, because the scheme was defined via
        the closure: $\Hilbshortother = \overline{\Hilbshortother \setminus
        \Hilbshortzero}$. The last claim follows because $\DIR$ is prime and
        of codimension one thus its preimage under $\pi$ is also such.
    \end{proof}

    \begin{remark}
        If $[Z]\in \Hilbshortother$ lies in $\Hilbshortzero$, then the tangent
        space to $\Hilbshort$ at $[Z]$ has dimension at least $85 = 76 + 9$.
        This is explained geometrically by an elegant argument
        of \cite{Ranestad_Iliev__VSPcubic}, which we sketch below.
        Recall that we have an embedding $Z \subset  \mathbb{A}^6 \cap
        \wedgetwo$.
        Define a rational map $\varphi:\mathbb{P}(\wedgetwo) \dashrightarrow
        \mathbb{P}(\wedgetwodual)$ as the composition $\wedgetwo \to \Lambda^4 \VV  \simeq
        \Lambda^2 \VV^*$ where the first map is $w\to w\wedge w$ and the
        second is an isomorphism coming from a choice of element of $\Lambda^6
        \VV$. Then $\varphi$ is defined by the $15$ quadrics vanishing on
        $\Gtwosix$ and it is birational with a natural inverse given by
        quadrics vanishing on $Gr(2, V^*)$.
        The $15$ quadrics in the ideal of $Z$ are the restrictions of the $15$
        quadrics defining $\varphi$. Thus the map $\varphi:Z\to \wedgetwodual$
        is defined by $15$ quadrics in the ideal of $Z$.
        Since $\varphi^{-1}(\varphi(Z))$ spans at most an $\mathbb{A}^6$ the
        coordinates of $\varphi^{-1}$ give $15 - 6 = 9$ quadratic relations between
        those quadrics. Therefore $\dim I_4 \leq \dim \Sym^2 I_2 - 9 \leq 126
        - 15$ and $r$ from the discussion above
        Lemma~\ref{ref:onlyquatricsmatter:lem} is at least $15$ so that the tangent space
        dimension is at least $85$.
    \end{remark}

    We conclude by making a formal proof of the main theorem and adding some remarks.
    \begin{proof}[Proof of Theorem~\ref{ref:mainthm:thm}]
        2.~By Claim~\ref{claim:singularlocus} the component $\Hilbshortother$ is
        reduced. Hence this part follows by Claim~\ref{claim:vectorbundle}.

        3.~This is proved in Claim~\ref{claim:exceptional}.

        1. $\Hilbshortother$ is smooth and connected, being a vector bundle
        over $\cubicspacereg$.

        4. This is proved in Claim~\ref{claim:singularlocus}.
    \end{proof}

        \begin{remark}\label{ref:chilozuccero:rmk}
            In \cite{CHILO__symmetrizing_matrix_multiplication} Chiantini,
            Hauenstein, Ikenmeyer, Landsberg and Ottaviani give very strong numerical evidence that
            $\secantvariety$ is an ideal-theoretic complete intersection of
            a degree $10$ and degree $28$ divisors. Since the degree $10$
            divisor is $\SL{V}$-invariant, it is equal to $\DIR$. It would be
            interesting to know whether the second divisor is $\DVap$
            introduced in~\cite{Ranestad_Voisin__VSP}.

            The outline of the argument of \cite{CHILO__symmetrizing_matrix_multiplication},
            as communicated by the authors, is to use \cite{hauenstein_sommese__witness} to show that
            $\secantvariety$ has codimension $2$ and degree $280$. Then \cite{daleo__hauenstein__aCM, daleo__hauenstein__aG} are
            used to show that it is
            arithmetically Gorenstein. Finally, the Hilbert function of the
            $280$ witness points allows one to conclude that it is actually a
            complete intersection of a
            degree $10$ and a degree $28$ polynomial.
        \end{remark}

    \begin{remark}\label{ref:uniquesmall:rmk}
        \def\ZZ{\mathcal{Z}}%
        The dimension of $\Hilbshortother$ equal to $76$ is smaller than the
        dimension of $\Hilbshortzero$ equal to $14\cdot 6 = 84$. This is the
        only known example of a component $\ZZ$ of the Gorenstein locus of
        Hilbert scheme of $d$ points on $\mathbb{A}^n$ such
        that $\dim \ZZ \leq dn$ and points of $\ZZ$ correspond to \emph{irreducible}
        subschemes. It is an interesting question whether other examples
        exist.
    \end{remark}

\small
\newcommand{\etalchar}[1]{$^{#1}$}
\def\cdprime{$''$}


\begin{thebibliography}{DJNdT17}

\bibitem[AH95]{AlexHirsch}
J.~Alexander and A.~Hirschowitz.
\newblock Polynomial interpolation in several variables.
\newblock {\em J. Algebraic Geom.}, 4(2):201--222, 1995.

\bibitem[BB14]{bubu2010}
Weronika Buczy{\'n}ska and Jaros{\l}aw Buczy{\'n}ski.
\newblock Secant varieties to high degree {V}eronese reembeddings,
  catalecticant matrices and smoothable {G}orenstein schemes.
\newblock {\em J. Algebraic Geom.}, 23:63--90, 2014.

\bibitem[BBKT15]{nisiabu_jabu_kleppe_teitler_direct_sums}
Weronika Buczy{\'n}ska, Jaros{\l}aw Buczy{\'n}ski, Johannes Kleppe, and Zach
  Teitler.
\newblock Apolarity and direct sum decomposability of polynomials.
\newblock {\em Michigan Math. J.}, 64(4):675--719, 2015.

\bibitem[BCP97]{Magma}
Wieb Bosma, John Cannon, and Catherine Playoust.
\newblock The {M}agma algebra system. {I}. {T}he user language.
\newblock {\em J. Symbolic Comput.}, 24(3-4):235--265, 1997.
\newblock Computational algebra and number theory (London, 1993).

\bibitem[BdSHJ13]{dosSantos}
Patricia Borges~dos Santos, Abdelmoubine~Amar Henni, and Marcos Jardim.
\newblock Commuting matrices and the {H}ilbert scheme of points on affine
  spaces.
\newblock {\em arXiv preprint arXiv:1304.3028.}, 2013.

\bibitem[CEVV09]{CEVV}
Dustin~A. Cartwright, Daniel Erman, Mauricio Velasco, and Bianca Viray.
\newblock Hilbert schemes of 8 points.
\newblock {\em Algebra Number Theory}, 3(7):763--795, 2009.

\bibitem[CHC{\etalchar{+}}]{CHILO__symmetrizing_matrix_multiplication}
Luca Chiantini, Jonathan~D. Hauenstein, Ikenmeyer Christian, Joseph~M.
  Landsberg, and Giorgio Ottaviani.
\newblock Symmetrizing the matrix multiplication tensor.
\newblock {\em ArXiv e-print 1706.05074.}

\bibitem[CJN15]{cjn13}
Gianfranco Casnati, Joachim Jelisiejew, and Roberto Notari.
\newblock Irreducibility of the {G}orenstein loci of {H}ilbert schemes via ray
  families.
\newblock {\em Algebra Number Theory}, 9(7):1525--1570, 2015.

\bibitem[CN09]{cn09}
Gianfranco Casnati and Roberto Notari.
\newblock On the {G}orenstein locus of some punctual {H}ilbert schemes.
\newblock {\em J. Pure Appl. Algebra}, 213(11):2055--2074, 2009.

\bibitem[CN11]{cn10}
Gianfranco Casnati and Roberto Notari.
\newblock On the irreducibility and the singularities of the {G}orenstein locus
  of the punctual {H}ilbert scheme of degree 10.
\newblock {\em J. Pure Appl. Algebra}, 215(6):1243--1254, 2011.

\bibitem[DH16a]{daleo__hauenstein__aCM}
Noah~S. Daleo and Jonathan~D. Hauenstein.
\newblock Numerically deciding the arithmetically {C}ohen-{M}acaulayness of a
  projective scheme.
\newblock {\em J. Symbolic Comput.}, 72:128--146, 2016.

\bibitem[DH16b]{daleo__hauenstein__aG}
Noah~S. Daleo and Jonathan~D. Hauenstein.
\newblock {\em Numerically Testing Generically Reduced Projective Schemes for
  the Arithmetic Gorenstein Property}, pages 137--142.
\newblock Springer International Publishing, Cham, 2016.

\bibitem[DJNdT17]{DJNT}
Theodosios Douvropoulos, Joachim Jelisiejew, Bernt Ivar Utst\o~l N\o~dland, and
  Zach Teitler.
\newblock The {H}ilbert scheme of 11 points in {$\Bbb A^3$} is irreducible.
\newblock In {\em Combinatorial algebraic geometry}, volume~80 of {\em Fields
  Inst. Commun.}, pages 321--352. Fields Inst. Res. Math. Sci., Toronto, ON,
  2017.

\bibitem[Eis95]{Eisenbud}
David Eisenbud.
\newblock {\em Commutative algebra}, volume 150 of {\em Graduate Texts in
  Mathematics}.
\newblock Springer-Verlag, New York, 1995.
\newblock With a view toward algebraic geometry.

\bibitem[Ems78]{emsalem}
Jacques Emsalem.
\newblock G\'eom\'etrie des points \'epais.
\newblock {\em Bull. Soc. Math. France}, 106(4):399--416, 1978.

\bibitem[ER12]{EliasRossiShortGorenstein}
Joan Elias and Maria~Evelina Rossi.
\newblock Isomorphism classes of short {G}orenstein local rings via
  {M}acaulay's inverse system.
\newblock {\em Trans. Amer. Math. Soc.}, 364(9):4589--4604, 2012.

\bibitem[ER15]{EliasRossi_Analytic_isomorphisms}
Joan Elias and Maria~Evelina Rossi.
\newblock Analytic isomorphisms of compressed local algebras.
\newblock {\em Proc. Amer. Math. Soc.}, 143(3):973--987, 2015.

\bibitem[ER17]{Elias_Rossi__Gorenstein_for_higher_dims}
J.~Elias and M.~E. Rossi.
\newblock The structure of the inverse system of {G}orenstein {$k$}-algebras.
\newblock {\em Adv. Math.}, 314:306--327, 2017.

\bibitem[Erm12]{erman_Murphys_law_for_punctual_Hilb}
Daniel Erman.
\newblock Murphy's law for {H}ilbert function strata in the {H}ilbert scheme of
  points.
\newblock {\em Math. Res. Lett.}, 19(6):1277--1281, 2012.

\bibitem[Ga{\l}17]{Galazka_VB_eqs_of_cacti}
Maciej Ga{\l}{\k{a}}zka.
\newblock Vector bundles give equations of cactus varieties.
\newblock {\em Linear Algebra Appl.}, 521:254--262, 2017.

\bibitem[GS]{M2}
Daniel~R. Grayson and Michael~E. Stillman.
\newblock Macaulay2, a software system for research in algebraic geometry.
\newblock Available at \url{http://www.math.uiuc.edu/Macaulay2/}.

\bibitem[HS10]{hauenstein_sommese__witness}
Jonathan~D. Hauenstein and Andrew~J. Sommese.
\newblock Witness sets of projections.
\newblock {\em Appl. Math. Comput.}, 217(7):3349--3354, 2010.

\bibitem[Iar94]{ia94}
Anthony Iarrobino.
\newblock Associated graded algebra of a {G}orenstein {A}rtin algebra.
\newblock {\em Mem. Amer. Math. Soc.}, 107(514):viii+115, 1994.

\bibitem[IE78]{emsalem_iarrobino_small_tangent_space}
Anthony Iarrobino and Jacques Emsalem.
\newblock Some zero-dimensional generic singularities; finite algebras having
  small tangent space.
\newblock {\em Compositio Math.}, 36(2):145--188, 1978.

\bibitem[IK99]{iarrobino_kanev_book_Gorenstein_algebras}
Anthony Iarrobino and Vassil Kanev.
\newblock {\em Power sums, {G}orenstein algebras, and determinantal loci},
  volume 1721 of {\em Lecture Notes in Mathematics}.
\newblock Springer-Verlag, Berlin, 1999.
\newblock Appendix C by Anthony Iarrobino and Steven L. Kleiman.

\bibitem[IR01]{Ranestad_Iliev__VSPcubic}
Atanas Iliev and Kristian Ranestad.
\newblock {$K3$} surfaces of genus 8 and varieties of sums of powers of cubic
  fourfolds.
\newblock {\em Trans. Amer. Math. Soc.}, 353(4):1455--1468, 2001.

\bibitem[IR07]{Ranestad_Iliev__VSPcubic_addendum}
Atanas Iliev and Kristian Ranestad.
\newblock Addendum to ``{$K3$} surfaces of genus 8 and varieties of sums of
  powers of cubic fourfolds'' [{T}rans. {A}mer. {M}ath. {S}oc. {\bf 353}
  (2001), no. 4, 1455--1468; mr1806733].
\newblock {\em C. R. Acad. Bulgare Sci.}, 60(12):1265--1270, 2007.

\bibitem[Jel16]{Jel_classifying}
Joachim Jelisiejew.
\newblock Classifying local {A}rtinian {G}orenstein algebras.
\newblock {\em Collectanea Mathematica}, pages 1--27, 2016.

\bibitem[Jel17]{Jelisiejew__Elementary}
Joachim Jelisiejew.
\newblock Elementary components of {H}ilbert schemes.
\newblock {\it arXiv:1710.06124}, 2017.

\bibitem[Lam99]{Lam__rings_and_modules}
Tsit~Yuen Lam.
\newblock {\em Lectures on modules and rings}, volume 189 of {\em Graduate
  Texts in Mathematics}.
\newblock Springer-Verlag, New York, 1999.

\bibitem[LiE]{LiE}
LiE.
\newblock A computer algebra package for {L}ie group computations.
\newblock {\em http://young.sp2mi.univ-poitiers.fr/~marc/LiE/}.

\bibitem[LO13]{LO}
Joseph~M. Landsberg and Giorgio Ottaviani.
\newblock Equations for secant varieties of {V}eronese and other varieties.
\newblock {\em Ann. Mat. Pura Appl. (4)}, 192(4):569--606, 2013.

\bibitem[LR11]{Roggero_Lella__rationality}
Paolo Lella and Margherita Roggero.
\newblock Rational components of {H}ilbert schemes.
\newblock {\em Rend. Semin. Mat. Univ. Padova}, 126:11--45, 2011.

\bibitem[Mou17]{Mourrain_polyexp}
Bernard Mourrain.
\newblock {P}olynomial--{E}xponential {D}ecomposition {F}rom {M}oments.
\newblock {\em Foundations of Computational Mathematics}, Oct 2017.

\bibitem[MT18]{Masuti_Tozzo__Structure_of_inverse_system}
Shreedevi~K. Masuti and Laura Tozzo.
\newblock {T}he structure of the inverse system of level {K}-algebras.
\newblock {\em Collectanea Mathematica}, Feb 2018.

\bibitem[RV17]{Ranestad_Voisin__VSP}
Kristian Ranestad and Claire Voisin.
\newblock Variety of power sums and divisors in the moduli space of cubic
  fourfolds.
\newblock {\em Doc. Math.}, 22:455--504, 2017.

\bibitem[{\v{S}}iv12]{Sivic__Varieties_of_commuting_matrices}
Klemen {\v{S}}ivic.
\newblock On varieties of commuting triples {III}.
\newblock {\em Linear Algebra Appl.}, 437(2):393--460, 2012.

\end{thebibliography}
\end{document}